\newtheorem{Th}{Theorem}[section]
\newtheorem*{thma}{Theorem A}
\newtheorem*{thmb}{Theorem B}
\newtheorem{Prop}[Th]{Proposition}
\newtheorem{Lemma}[Th]{Lemma}
\newtheorem{Cor}[Th]{Corollary}
\theoremstyle{definition}
\newtheorem{Remark}[Th]{Remark}
\newcommand{\beq}{\begin{equation}}
\newcommand{\eeq}{\end{equation}}
\def\scalar(#1,#2){(#1\mid#2)}
\newcommand{\R}{{\mathbb{R}}}
\newcommand{\T}{{\mathbb{T}}}
\newcommand{\C}{{\mathbb{C}}}
\newcommand{\Z}{{\mathbb{Z}}}
\newcommand{\N}{{\mathbb{N}}}
\newcommand{\PP}{{\mathbb{P}}}
\newcommand{\vep}{\varepsilon}
\newcommand{\mob}{\boldsymbol{\mu}}
\begin{document}

\title{Prime number theorem for regular Toeplitz subshifts}
\author[K.~Fr\k{a}czek]{Krzysztof Fr\k{a}czek}
\address{Faculty of Mathematics and Computer Science, Nicolaus
Copernicus University, ul. Chopina 12/18, 87-100 Toru\'n, Poland}
\email{fraczek@mat.umk.pl}

\author[A.~Kanigowski]{Adam Kanigowski}
\address{Department of Mathematics, University of Maryland, 4176 Campus Drive, William E.\ Kirwan Hall,
College Park, MD 20742-4015, USA}
\email{akanigow@umd.edu}

\author[M.~Lema\'nczyk]{Mariusz Lema\'nczyk}
\thanks{Research supported by Narodowe Centrum Nauki grant 2019/33/B/ST1/00364.}
\address{Faculty of Mathematics and Computer Science, Nicolaus
Copernicus University, ul. Chopina 12/18, 87-100 Toru\'n, Poland}
\email{mlem@mat.umk.pl}

\subjclass[2000]{37B10, 37A45, 11N05, 11N13}
\keywords{prime number theorem, Toeplitz systems, almost prime numbers, polynomial ergodic theorems}

\begin{abstract}
We prove that neither a prime  nor {an $l$-almost prime} number  theorem hold in the class of regular  Toeplitz subshifts. But, {when a quantitative strengthening of the regularity with respect to the periodic structure involving Euler's totient function is assumed}, then the two theorems hold.
\end{abstract}

\maketitle

\section{Introduction} Given a topological dynamical system $(X,T)$, where $T$ is a homeomorphism of a compact metric space $X$, one says that a prime number theorem (PNT) holds {for $(X,T)$} if the limit
\begin{equation}\label{pnt31}
\lim_{N\to\infty}\frac1{\pi(N)}\sum_{p<N}f(T^px)
\end{equation}
($p$ stands always for a prime number) exists for each $x\in X$, an arbitrary $f\in C(X)$ and $\pi(N)$ denotes the number of primes up to $N$. {Then}, via the Riesz theorem, for all $f\in C(X)$, we have
\begin{equation}\label{pnt31a}
\lim_{N\to\infty}\frac1{\pi(N)}\sum_{p<N}f(T^px)=\int_X f\,d\nu_x
\end{equation}
for a {Borel probability measure} $\nu_x$ on $X$, where $\nu_x$ depends only on $x\in X$.

Let us first consider the cyclic case $X=\Z/k\Z$ and $Tx=x+1$. Fix $x\in X$ and notice that~\eqref{pnt31} indeed holds by the classical prime number theorem in arithmetic progressions, where
$\nu_x$ is the uniform probability measure on the ``coset'' $\{a<k:\: (a,k)=1\}+x$. Hence, a PNT holds in cyclic (and therefore also in finite) systems.

{Consider now the procyclic case, that is, assume we are given an odometer system $(H,T)$ with
\[
H={\rm liminv}_{t\to\infty}\,\Z/n_t\Z, \;Tx=x+(1,1,\ldots)
\]
(here $n_t|n_{t+1}$ for $t\geq 0$). In this case, a PNT still holds. Indeed, the space $H$ has a sequence of natural partitions  $D^t=(D^t_0,\ldots,D^t_{n_t-1})$, $t\geq 0$, consisting of clopen sets and such that $TD^t_i=D^t_{i+1\; {\rm mod}~n_t}$. It follows that  the sets $D^t_i$, $i\leq n_t-1$, have the same diameter which goes to $0$ as $t\to \infty$. Moreover, it is not hard to see that each character of the group $H$ is constant on the levels of the towers $D^t$ for $t$ sufficiently large.
Hence, each $f\in C(H)$ can be approximated {\bf uniformly} by functions which are constant on the levels of the towers $D^t$  and a PNT holds because it does in the finite case.}

{Our main results concern prime number theorems for extensions of odometers. Recall that odometers  are zero entropy topological systems which are minimal (all $T$-orbits are dense) and uniquely ergodic (there is only one $T$-invariant measure - Haar measure in this case). Before we describe our results, let us discuss a PNT in the class of uniquely ergodic systems. First, recall that for all such systems~\eqref{pnt31} holds} a.e.\ with respect to the unique invariant measure \cite{Bo}, \cite{Wi}. On the other hand, one can easily construct a counterexample to the validity of~\eqref{pnt31} for {\bf all} $x\in X$.  Indeed, denote  by $\PP$  the set of prime numbers and consider the left shift $S$ on $\{0,1\}^{\Z}$ and the subshift  $(X_{{\mathbf 1}_{\PP\cup-\PP}}, S)$~ obtained by the orbit closure of the characteristic function ${\mathbf 1}_{\PP\cup-\PP}$ of the ``symmetrized'' primes. It  has a unique invariant measure {of zero entropy} (which is the Dirac measure at the fixed point $\ldots0.00\ldots$) and a PNT fails in  it (see e.g.\ \cite{Fe-Ku-Le} for details). Now, this particular uniquely ergodic model of the one-point system implies paradoxically that
each ergodic dynamical system has a uniquely ergodic model $(X,T)$~\footnote{Recall that the Jewett-Kreiger theorem says the following: Suppose $(Z,\kappa,R)$ is an ergodic measure-theoretic dynamical system. Then there exists a uniquely ergodic (even strictly ergodic, that is, additionally minimal) topological system $(Y,S)$ with the unique invariant measure $\nu$ such that $(Z,\kappa,R)$ and $(Y,\nu,S)$ are {\bf measure-theoretically} isomorphic.} in which a PNT does not hold. To see this,
take any uniquely ergodic model $(Y,\nu,R)$ of the given measure-theoretic dynamical system. Since the one-point system is (Furstenberg) disjoint with any other system, the product system
$(X_{{\mathbf1}_{\PP\cup-\PP}}\times Y, S\times R)$ is still uniquely ergodic, with the unique invariant measure $\delta_{\ldots0.00\ldots}\otimes\nu$. It is not hard to see that the product  system is still measure-theoretically isomorphic to the original system. Since the new system has $(X_{{\mathbf 1}_{\PP\cup-\PP}},S)$ as
 its {\bf topological} factor, a PNT does not hold in
$(X_{{\mathbf1}_{\PP\cup-\PP}}\times Y, S\times R)$.\footnote{To illustrate this, consider an irrational rotation $R_\alpha$ on $\T$ for which a PNT holds because of Vinogradov's theorem (prime ``orbits'' are equidistributed). However, our observation shows that there is a uniquely ergodic model of $R_\alpha$ in which the eigenfunctions are still continuous but a PNT fails, that is, some of the prime ``orbits'' are not equidistributed.}  Hence, if we think about a necessary condition for a PNT to hold, it looks reasonable to add the minimality assumption to avoid a problem of ,,exotic'' orbits on which PNT does not hold (we also recall that a uniquely ergodic system has a unique subsystem which is strictly ergodic).
{However, in this class one can still produce counterexamples to a PNT, see \cite{Pa} for the first symbolic counterexamples (although their entropy is not determined in \cite{Pa}), or  \cite{Ka-Le-Ra} for non-symbolic counterexamples.} On the other hand, we have quite a few classes in which a PNT holds, including systems of algebraic origin \cite{Gr-Ta}, \cite{Vi}, symbolic systems \cite{Bo1}, \cite{Gr}, \cite{Ma-Ri}, \cite{Mu} or recently \cite{Ka-Le-Ra} in the category of smooth systems, where a PNT has been proved in the class of analytic Anzai skew products. Finding a sufficient {\bf dynamical} condition for a PNT to hold, postulated a few years ago by P.~Sarnak \cite{Sa-talk} seems to be an important and difficult task in dynamics, however we rather expect the following:

\medskip
\noindent\textbf{Working Conjecture:}
{\em Each ergodic and aperiodic\footnote{{The set of periodic points has measure zero.}} measure-theoretic dynamical system has a strictly ergodic model in which a PNT fails.}

\medskip

{If true}, this makes Sarnak's postulate even harder to realize. The present paper should be viewed as introductory steps in trying to understand the conjecture.

A PNT can be reformulated as the existence of a limit of $\frac1N\sum_{n<N}f(T^nx)\Lambda(n)$, where $\Lambda$ stands for the von Mangoldt function: $\Lambda(p^\ell)=\log p$ for $\ell\geq1$ and 0 otherwise.
{Proving dynamical prime number theorems for zero entropy systems is closely related to  Sarnak's  M\"obius disjointness conjecture \cite{Sa}:}
\begin{equation}\label{sar1}
\lim_{N\to\infty}\frac1N\sum_{n<N}f(T^nx)\mob(n)=0\end{equation}
for each $x\in X$, $f\in C(X)$ in each {\bf zero entropy} dynamical system $(X,T)$ ($\mob$ stands for the M\"obius function: $\mob(1)=1$, $\mob(p_1\cdot\ldots\cdot p_k)=(-1)^k$ for different primes $p_1,\ldots, p_k$, and $\mob(n)=0$ for the remaining $n\in\N$). Here, the {class of systems} for which we expect the positive answer is precisely defined. {In fact, in quite a few cases (see \cite{Bo1}, \cite{Bo2}, \cite{Fe-Ma}-\cite{Gr-Ta}, \cite{Ma-Ri} and \cite{Mu}) one can observe the} following principle: once we can prove Sarnak's conjecture for $(X,T)$ with a ``sufficient'' speed of convergence to zero in~\eqref{sar1} then a PNT holds in $(X,T)$.

With all the above in mind we come {back to extensions of odometers that we intend to study.} We stay in the {\bf zero entropy} category of systems and we assume {\bf minimality}. Further, we assume that the systems are {\bf almost 1-1 extensions} of odometers.\footnote{If $(H,T)$ is a factor of $(X,S)$ via $\pi:X\to H$, then $(X,S)$ is called  an {\em almost 1-1 extension} of $(H,T)$ if there is a point $h\in H$ such that $|\pi^{-1}(h)|=1$; in fact, in this case the set of points with singleton fibers is $G_\delta$ and dense.} We also assume that our systems are {\bf symbolic}.\footnote{We recall that each zero entropy system has an extension which is symbolic \cite{Bo-Fi-Fi}, and clearly if a PNT holds for a system, it does for a factor.} All these natural assumptions determine however a very precise class of topological systems, namely Toeplitz subshifts $(X_x, S)$, where $x$ is a Toeplitz sequence over a finite alphabet $\mathcal{A}$, {see Section~7 in Downarowicz's survey \cite{Do}.} That is, {$x\in \mathcal{A}^{\Z}$ has the property that
for every $a\in \Z$ there is $\ell\in\N$ such that $x(a)=x({a+k\ell})$ for each $k\in\Z$, and $X_x$ is the set of all $y\in\mathcal{A}^{\Z}$ with the property that all subblocks of $y$ also appear in $x$.}
One shows then that there is a sequence $n_t|n_{t+1}$ such that if ${\rm Per}_{n_t}(x):=\{a\in\Z:\: x(a)=x(a+kn_t)\text{ for each }k\in\Z\}$ then \begin{equation}\label{max}\bigcup_{t\geq0}{\rm Per}_{n_t}(x)=\Z.\end{equation} Moreover, there is a natural continuous factor map $\pi:X_x\to H$, where $H$ stands for the odometer determined by $(n_t)$.
In fact, we will restrict our attention to so called {\em regular} Toeplitz subshifts,
%In fact, we will restrict the class of Toeplitz subshifts to so called {\em regular} Toeplitz subshifts, 
whose formal definition is that the density of $\bigcup_{t=0}^M{\rm Per}_{n_t}(x)$ {goes} to $1$. Regular Toeplitz subshifts are zero entropy strictly ergodic systems, and measure-theoretically isomorphic to the rotation given by {their} maximal equicontinuous factors. Although in \cite{Do} there are four other equivalent conditions for regularity (see Theorem 13.1 in \cite{Do}), we will choose a different path. {Since} $\pi:X_x\to H$ is a continuous and equivariant  surjection,
\[E^t:=\pi^{-1}(D^t)=(E^t_0,\ldots, E^t_{n_t-1})\text{ with }E^t_j=\pi^{-1}(D^t_j)\]
is an $S$-tower of height $n_t$ whose levels are closed (hence clopen). By the minimality of $(X_x,S)$ there is a unique tower with clopen levels and of fixed height.
{Let us consider a metric on $\mathcal{A}^\Z$ inducing the product topology given by}
\[d(x,y)=2^{-\inf\{|n|:x(n)\neq y(n)\}}.\]
The diameters of the levels of towers $E^t$ {\bf do not} converge to zero, unless $x$ is periodic. Moreover, the diameters of different levels are in general different as the shift $S$ is not an isometry.
Let us consider the diameter of the tower $E^t$ given by:
\[\delta(E^t):=\sum_{0\leq j<n_t}\operatorname{diam}(E_j^t).\]
It is not hard to see (see Appendix~\ref{app:?}) that the regularity of a Toeplitz sequence is equivalent to
\begin{equation}\label{weyl1}
\lim_{t\to\infty}\frac{\delta(E^t)}{n_t}=0.
\end{equation}
It is also not hard to see that this property does not depend on the choice of $(n_t)$ satisfying~\eqref{max}. We recall that the M\"obius disjointness of subshifts given by regular Toeplitz sequences has been proved in \cite{Ab-Ka-Le}. Here are two first results of the paper proved in Section~\ref{s:Sec1} and Section~\ref{s:Sec3}, respectively:

\begin{thma}
A PNT does not hold in {the class of minimal almost $1-1$ symbolic extensions of odometers satisfying~\eqref{weyl1}.} That is, a PNT need not hold in a strictly ergodic subshift determined by a {\bf regular} Toeplitz sequence.
\end{thma}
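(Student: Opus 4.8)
The plan is to construct a regular Toeplitz sequence $x$ for which the prime orbit of some point $y\in X_x$ fails to equidistribute, by encoding arithmetic information about primes into the periodic structure. Recall that a Toeplitz sequence is built by filling positions along the residue classes modulo $n_0 \mid n_1 \mid \cdots$, and that regularity is equivalent to~\eqref{weyl1}, i.e.\ $\delta(E^t)/n_t\to 0$. The key observation is that the point $h_0\in H$ with $\pi^{-1}(h_0)$ a singleton "remembers'' only the odometer coordinate, but a typical point of $X_x$ sits over a fiber $\pi^{-1}(h)$ of positive measure; along such a point the values of $x$ at positions $a$ with $a\notin \bigcup_{t\le M}\mathrm{Per}_{n_t}(x)$ are free, and it is precisely at those positions that we can smuggle in a counterexample.

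First I would fix the odometer to be, say, $H=\varprojlim \Z/n_t\Z$ with $n_t\to\infty$ chosen so that the "holes'' (positions not yet periodized at level $t$) form a set of density $\eps_t\to 0$ but summing the $\eps_t$'s slowly enough; this guarantees regularity, i.e.~\eqref{weyl1}. On the periodized positions we place some fixed symbol, say $0$, and the construction is arranged so that the hole set $\mathrm{Hole}_t := \Z\setminus\bigcup_{s\le t}\mathrm{Per}_{n_s}(x)$ is a union of full residue classes mod $n_{t+1}$. Now the crucial step: among the holes I would reserve one residue class, and in it plant the sequence $\mathbf 1_{\PP}$ of primes (suitably thinned or shifted so it is itself compatible with being a sub-pattern of a Toeplitz filling at deeper levels — in practice one lets deeper holes refine this class further so the primes occupy an ever sparser but nonempty portion). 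The point $y\in X_x$ that reveals non-equidistribution is the one obtained as a limit of shifts $S^{a_k}x$ where $a_k$ runs through the planted class in such a way that $y(0), y(p), y(2p),\dots$ reads off the indicator of primes — mimicking the classical $X_{\mathbf1_{\PP\cup-\PP}}$ example — so that $\frac1{\pi(N)}\sum_{p<N}f(S^p y)$ oscillates for a suitable $f\in C(X_x)$.

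Concretely, the argument splits as: \emph{(i)} verify the planted sequence has density zero in $\Z$ so that~\eqref{weyl1} still holds after the modification and $x$ remains a genuine Toeplitz sequence; \emph{(ii)} identify explicitly the fiber point $y$ over the appropriate $h\in H$ and show $y\in X_x$ (it is in the orbit closure because every finite block of $y$ already appears in $x$ by construction); \emph{(iii)} choose $f\in C(X_x)$ depending only on the symbol at coordinate $0$ and show $\frac1{\pi(N)}\sum_{p<N}f(S^p y)$ does not converge, by reducing it to the known failure of a PNT for $\mathbf 1_\PP$ at the fixed point — i.e.\ exploiting that $\sum_{p<N}\mathbf 1_\PP(p+c)$ behaves erratically (in fact the relevant sum along $p$ of the indicator of "$p+c$ is prime'' is governed by twin-prime-type heuristics and provably does not produce a clean limit, or one engineers the planted pattern directly so that the Cesàro averages along primes oscillate between two values).

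The main obstacle is reconciling two competing demands on the planted pattern: it must be sparse enough that inserting it does not destroy regularity~\eqref{weyl1} and does not force $x$ out of the Toeplitz class, yet it must be "visible along primes'' from some point of the subshift, which requires the prime orbit of $y$ to hit the planted positions with positive frequency in a controlled, non-averaging way. Balancing these — i.e.\ making the planted class sparse in $\Z$ but commensurable with the arithmetic of primes so that $\{p : p+c \in \text{planted class}\}$ has the right erratic behaviour — is the delicate combinatorial heart of the construction; one likely resolves it by letting the planted class be a fixed residue class $c \bmod m$ (with $m$ one of the $n_t$) so that primes automatically distribute over it by Dirichlet, and then using a second, finer hole structure inside that class to record the counterexample pattern, finally verifying the diameters estimate~\eqref{weyl1} survives because the counterexample pattern itself sits on a density-zero subset.
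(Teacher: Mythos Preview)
Your proposal has a genuine gap at the crucial step. The mechanism you propose for non-convergence in step (iii) --- that $\frac{1}{\pi(N)}\sum_{p<N}\mathbf{1}_{\PP}(p+c)$ ``behaves erratically'' and ``provably does not produce a clean limit'' --- is not a theorem but an open problem (it is governed by Hardy--Littlewood-type conjectures, under which such sums are in fact expected to converge). You cannot base a construction on this. Your fallback clause, ``or one engineers the planted pattern directly so that the Ces\`aro averages along primes oscillate between two values'', is precisely the content of the theorem, and you give no indication how to do it while keeping $x$ Toeplitz and regular.

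The paper's approach is quite different from your embedding idea and avoids this obstacle entirely. It works directly at the Toeplitz point $x$ itself (no auxiliary $y$ is needed). The key observation is this: if at stage $t$ one arranges that the set of unfilled positions $\{0\le i<n_t : x_t(i)=?\}$ is contained in $\{j:(j,n_t)=1\}$, then every prime $p<n_t$ not dividing $n_t$ lies in the unfilled set, so the values $x(p)$ at prime positions are still free. One then fills the first block $x_t[0,n_t-1]$ at stage $t+1$ so as to force $\frac{1}{\pi(n_t)}\sum_{p<n_t}(-1)^{x(p)}$ far from its previous value; Dirichlet's theorem (applied with modulus $n_t$) guarantees that enough prime positions remain unfilled at every later stage to repeat this indefinitely. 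Regularity comes for free from this same bookkeeping: since $?_t\le\varphi(n_t)$ and one chooses $n_t$ with enough prime factors that $\varphi(n_t)/n_t\to 0$, condition~\eqref{weyl1} holds. The whole argument uses only Dirichlet's theorem on primes in progressions --- nothing about correlations of $\mathbf{1}_{\PP}$ with its shifts.
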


\begin{thmb}
A PNT holds in the class of minimal almost $1-1$ symbolic extensions of odometers in which \eqref{weyl1} holds with a speed
\begin{equation}\label{weyl2}
\lim_{t\to\infty}\frac{\delta(E^t)}{\varphi(n_t)}=0,
\end{equation}
where $\varphi$ denotes the Euler totient function.
\end{thmb}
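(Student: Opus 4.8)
The plan is to reduce the statement to the prime number theorem in arithmetic progressions, exploiting that the normalisation by $\varphi(n_t)$ in~\eqref{weyl2} is exactly the density $1/\varphi(n_t)$ with which the primes fill a residue class modulo $n_t$. \emph{Step 1 (functions constant on the levels of one tower).} Fix $y\in X_x$ and let $r_t\in\{0,\dots,n_t-1\}$ be the residue with $\pi(y)\in D^t_{r_t}$. Since $\pi(S^py)=T^p\pi(y)$, we have $S^py\in E^t_j$ if and only if $p\equiv j-r_t\pmod{n_t}$. Hence for $g=\sum_{0\le j<n_t}c_j\mathbf 1_{E^t_j}$ (continuous, since the levels are clopen),
\[
\frac1{\pi(N)}\sum_{p<N}g(S^py)=\sum_{0\le j<n_t}c_j\,\frac{\#\{p<N:\ p\equiv j-r_t\pmod{n_t}\}}{\pi(N)}\ \longrightarrow\ \frac1{\varphi(n_t)}\sum_{\substack{0\le j<n_t\\ (j-r_t,\,n_t)=1}}c_j
\]
as $N\to\infty$, by the prime number theorem in arithmetic progressions (a coprime residue class contributes $\sim\pi(N)/\varphi(n_t)$ primes, any other class at most one prime). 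From Brun--Titchmarsh I also record an absolute constant $C$ with $\#\{p<N:\ p\equiv a\pmod{n_t}\}\le C\pi(N)/\varphi(n_t)$ for every $a$ coprime to $n_t$ and every $N\ge n_t^2$.

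\emph{Step 2 (approximation and reduction to an error term).} Given $f\in C(X_x)$ with modulus of continuity $\omega_f$ (with respect to $d$; it is non-decreasing, $\omega_f(s)\to0$ as $s\to0^+$, and $\omega_f\le2\|f\|_\infty$), I would pick a point $z_j^{(t)}\in E^t_j$ in every level and set $f_t=\sum_j f(z_j^{(t)})\mathbf 1_{E^t_j}$, so that $|f-f_t|(w)\le\omega_f(\operatorname{diam}E^t_{j(w)})$, $E^t_{j(w)}$ being the level through $w$. By Step~1 the number $\ell_t:=\lim_N\frac1{\pi(N)}\sum_{p<N}f_t(S^py)$ exists, and splitting $f=f_t+(f-f_t)$ gives
\[
\limsup_N\frac1{\pi(N)}\sum_{p<N}f(S^py)-\liminf_N\frac1{\pi(N)}\sum_{p<N}f(S^py)\ \le\ 2\limsup_N\frac1{\pi(N)}\sum_{p<N}|f-f_t|(S^py),
\]
so everything comes down to showing that the right-hand side tends to $0$ as $t\to\infty$.

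\emph{Step 3 (the error term --- where~\eqref{weyl2} is used).} Fix $\varepsilon>0$ and call $E^t_j$ \emph{bad} if $\operatorname{diam}E^t_j\ge\varepsilon$; by Markov's inequality at most $\delta(E^t)/\varepsilon$ levels are bad. For $p$ with $S^py$ in a good level one has $|f-f_t|(S^py)\le\omega_f(\varepsilon)$. The primes $p<N$ with $S^py$ in a bad level occupy at most $\delta(E^t)/\varepsilon$ residue classes modulo $n_t$; by Step~1 each coprime one of these contains $\le C\pi(N)/\varphi(n_t)$ primes and the at most $n_t$ non-coprime ones at most one prime each, so there are at most $C\pi(N)\delta(E^t)/(\varepsilon\varphi(n_t))+n_t$ of them, and on them $|f-f_t|\le2\|f\|_\infty$. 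Summing,
\[
\frac1{\pi(N)}\sum_{p<N}|f-f_t|(S^py)\ \le\ \omega_f(\varepsilon)+\frac{2C\|f\|_\infty}{\varepsilon}\cdot\frac{\delta(E^t)}{\varphi(n_t)}+\frac{2\|f\|_\infty\,n_t}{\pi(N)}.
\]
Letting $N\to\infty$ removes the last term, letting $t\to\infty$ and invoking~\eqref{weyl2} removes the middle one, leaving the bound $\omega_f(\varepsilon)$, and finally $\varepsilon\to0$. Hence the right-hand side of the displayed inequality in Step~2 tends to $0$, so $\lim_N\frac1{\pi(N)}\sum_{p<N}f(S^py)$ exists for every $f\in C(X_x)$ and every $y\in X_x$; being a positive unital linear functional of $f$, it equals $\int_{X_x}f\,d\nu_y$ for a Borel probability measure $\nu_y$ (Riesz), which depends on $y$ only through $\pi(y)$. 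This establishes the PNT.

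\emph{Main obstacle.} The delicate point is the bookkeeping in Step~3. One must resist bounding $\frac1{\varphi(n_t)}\sum_j\omega_f(\operatorname{diam}E^t_j)$ directly, because $n_t/\varphi(n_t)$ is unbounded along sequences like primorials and the small-diameter levels alone would then leave a term that need not vanish; instead one isolates the $O(\delta(E^t))$ large-diameter levels and uses that a union of $k$ residue classes modulo $n_t$ catches only a $\lesssim k/\varphi(n_t)$ fraction of the primes. This is precisely why the hypothesis $\delta(E^t)/\varphi(n_t)\to0$, rather than the bare regularity $\delta(E^t)/n_t\to0$ of Theorem~A, is what is needed. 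It remains to check that the structural inputs --- the $E^t_j$ are clopen and cyclically permuted by $S$, and membership $S^py\in E^t_j$ is governed by $p\bmod n_t$ --- are available in the stated generality, which they are since $\pi\colon X_x\to H$ is a continuous equivariant surjection onto the odometer.
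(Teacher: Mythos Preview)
Your proof is correct and the core mechanism---bad residue classes modulo $n_t$ catch only a $\lesssim k/\varphi(n_t)$ proportion of the primes---is the same one the paper uses, but your implementation is genuinely different. The paper first converts $\delta(E^t)$ into the count $?_t=\#\{0\le j<n_t:x_t(j)=?\}$ via the two-sided inequality $?_t\le\delta(E^t)\le 3\,?_t$ (Lemma~\ref{lem:?Et}), then proves a PNT under $?_k=o(\varphi(n_k))$ by (i) restricting to functions of the zero coordinate, where the ``bad'' primes are exactly those with $p+r\bmod n_k$ landing on a ``?''; (ii) lifting to functions of $2m+1$ coordinates by passing to the auxiliary Toeplitz sequence $x^{(m)}\in(\mathcal A^{2m+1})^{\Z}$; and (iii) closing by density. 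You bypass both the $?_t$ translation and the $x^{(m)}$ device: you approximate an arbitrary $f\in C(X_x)$ directly by a tower-constant function $f_t$, and replace the exact ``?'' bookkeeping by a Markov threshold that isolates the $\le\delta(E^t)/\varepsilon$ levels of diameter $\ge\varepsilon$. What the paper's route buys is the explicit approximating value $\frac{1}{\varphi(n_k)}\sum_{(a-r,n_k)=1,\ x_k(a)\neq ?}F(S^ax)$ (formula~\eqref{eq:p-a}), which it later reuses verbatim for the semi-prime theorem; what your route buys is that it is coordinate-free and would carry over unchanged to any minimal almost $1$--$1$ extension of an odometer with clopen tower levels, symbolic or not. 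One cosmetic remark: you do not actually need Brun--Titchmarsh, since $t$ is held fixed while $N\to\infty$ and Dirichlet already gives $\pi(N;n_t,a)\le 2\pi(N)/\varphi(n_t)$ for all large $N$; the paper proceeds that way.
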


As for all Toeplitz dynamical systems constructed in the proof of Theorem~A, we have
\[
0<\liminf_{t\to\infty}\frac{\delta(E^t)}{\varphi(n_t)}\leq \limsup_{t\to\infty}\frac{\delta(E^t)}{\varphi(n_t)}<+\infty,
\]
{which shows that the condition \eqref{weyl2}} in Theorem B is optimal to
have a PNT. 
{The systems in Theorem~B are strictly ergodic and since they all have non-trivial cyclic factors, the measures  $\nu_y$, $y\in X_x$,~in~\eqref{pnt31a} are never $S$-invariant.\footnote{To be compared with the case of Sturmian systems, see Theorem~B.1, in which $\nu_y$, $y\in X_{\alpha,\beta}$, are equal to the unique $S$-invariant measure.}}

\medskip

{
We then turn our attention to an $l$-almost prime number theorem (P$_l$NT) which  is much less explored than the PNT case and which, for the first time in dynamics, is studied in \cite{Ka-Le-Ra1} (for some smooth Anzai skew products). Recall that for any $l\geq 1$ a natural number is called an $l$-almost prime if it is a product of $l$ primes. We denote the set of $l$-almost prime numbers by $\mathbb{P}_l$. By $\mathbb{P}_l^N$ we denote the set of $l$-almost prime numbers $\leq N$ and we let $\pi_l(N)$ stand for the cardinality of $\mathbb{P}_l^N$. A classical result of Landau asserts that
\begin{equation}\label{land}
\lim_{N\to\infty}\frac{\pi_l(N)}{\frac{N}{\log N}\frac{(\log\log N)^{l-1}}{(l-1)!}}=1,
\end{equation}
see $\S$ 56 in \cite{La}.}

{Analogously to the PNT, we say that a topological dynamical system $(X,T)$ satisfies a P$_l$NT if the limit
\[
\lim_{N\to\infty}\frac1{\pi_l(N)}\sum_{n\in \mathbb{P}_l^N}f(T^nx)
\]
exists for each $x\in X$ and each $f\in C(X)$.}

 {In Section~\ref{s:SecPlNT} and Section~\ref{s:Sec4}  we provide sketches of proofs of the exact analogues of Theorems~A and~B for a P$_l$NT for regular Toeplitz subshifts.}
%}

In Section~\ref{sec:nonconv} we prove a new {polynomial ergodic theorem}:
\[\lim_{N\to\infty}\frac1N\sum_{n\leq N}f(S^{P(n)}x)\ \text{ exists} \]
for monic polynomials $P$ with positive integer coefficients for all symbolic minimal almost 1-1 extensions of odometers with a modified condition~\eqref{weyl2}. In Section~\ref{s:kwadrat} we provide a regular Toeplitz subshift which does not satisfy the {polynomial ergodic theorem} for squares but it satisfies a PNT. {We refer again to \cite{Pa} for the first examples of strictly ergodic systems (of low complexity), where the Birkhoff ergodic averages along squares do not converge.}

While Theorem~A confirms {the Working Conjecture} for a subclass of odometers, we have been unable to confirm it for the whole class of  odometers. Confirming Working Conjecture for the class of  automorphisms with discrete spectrum seems to be the first step toward a possible general statement. In Appendix~\ref{s:dosturmu}, we provide a simple argument {showing
that a PNT holds for all symbolic models of irrational rotations given by Sturmian sequences.} The Sturmian systems are strictly ergodic and are almost $1-1$ extensions of irrational rotations.

\section{Regular Toeplitz subshifts which do not satisfy PNT (proof of Theorem~A)}\label{s:Sec1}
%In this section we confirm the Working Conjecture in case of some %odometers.

%Denote by $\varphi:\N\to\N$ Euler's totient function.
For all $K,n\in \N$ and $a\in\Z$ let
\[\pi(K;n,a)=\{1\leq p\leq K:p\in\mathbb{P}, p=a\!\mod n\}.\]

\begin{Th}[{PNT in arithmetic progressions, see \cite{Sel}}]\label{thm:Diri}
For any natural $n$ and any integer $a$ with $(a,n)=1$ we have
\[\lim_{K\to\infty}\frac{\pi(K;n,a)}{{\pi(K)}}=\frac{1}{\varphi(n)}.\]
\end{Th}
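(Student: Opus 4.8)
This is the classical prime number theorem in arithmetic progressions, and the plan is to prove it by the standard route, reducing to the ordinary PNT by means of Dirichlet characters and the non-vanishing of Dirichlet $L$-functions on the line $\Re s=1$. First, by partial summation together with $\pi(K)\sim K/\log K$, the claimed asymptotic for $\pi(K;n,a)$ is equivalent to $\psi(K;n,a):=\sum_{m\le K,\ m\equiv a\,(\mathrm{mod}\,n)}\Lambda(m)\sim K/\varphi(n)$, so it suffices to handle the von Mangoldt weighted sum. Using the $\varphi(n)$ Dirichlet characters modulo $n$ and the orthogonality relation $\sum_{\chi}\overline{\chi}(a)\chi(m)=\varphi(n)\,\mathbf 1_{m\equiv a\,(\mathrm{mod}\,n)}$, valid since $(a,n)=1$, one writes
\[
\psi(K;n,a)=\frac1{\varphi(n)}\sum_{\chi\bmod n}\overline{\chi}(a)\,\psi(K,\chi),\qquad\text{where}\quad \psi(K,\chi):=\sum_{m\le K}\Lambda(m)\chi(m).
\]

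For the principal character $\chi_0$ one has $\psi(K,\chi_0)=\psi(K)+O(\log K)=K+o(K)$ by the ordinary PNT, the error term discarding the finitely many prime powers sharing a common factor with the fixed modulus $n$. The heart of the matter is then to prove $\psi(K,\chi)=o(K)$ for every non-principal $\chi\bmod n$. For this I would argue exactly as in the analytic proof of the PNT: the Dirichlet series $L(s,\chi)=\sum_{m\ge1}\chi(m)m^{-s}$ extends to an entire function for non-principal $\chi$, one has $-L'(s,\chi)/L(s,\chi)=\sum_{m\ge1}\Lambda(m)\chi(m)m^{-s}$ for $\Re s>1$, and the contour-integration (Tauberian) argument of the PNT proof then yields $\psi(K,\chi)=o(K)$ provided one knows $L(s,\chi)\neq0$ for $\Re s\ge1$. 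Assembling the contributions of all characters gives $\psi(K;n,a)=\frac1{\varphi(n)}\bigl(K+o(K)\bigr)$, which is the desired asymptotic.

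The only genuinely nontrivial ingredient, and the step I expect to be the main obstacle, is the non-vanishing of $L(s,\chi)$ on the line $\Re s=1$ for non-principal $\chi$. For $s=1+it$ with $t\neq0$ this comes from the classical positivity inequality $3+4\cos\theta+\cos2\theta\ge0$ applied to $\zeta(\sigma)^3\,|L(\sigma+it,\chi)|^4\,|L(\sigma+2it,\chi^2)|$ as $\sigma\to1^+$, combined with the simple pole of $\zeta$ at $s=1$. The delicate case is a potential real zero: one must show $L(1,\chi)\neq0$ when $\chi$ is real and non-principal, since then $\chi^2=\chi_0$ and the preceding trick degenerates. To handle it I would invoke Dirichlet's own argument — the nonvanishing of the relevant class number, equivalently Landau's comparison of $\sum_m\chi(m)m^{-s}$ with a Dirichlet series with non-negative coefficients having a pole at $s=1$. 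Once $L(1,\chi)\neq0$ is secured, a standard argument extends the non-vanishing to the whole line $\Re s=1$, and the proof is complete by the estimates above.
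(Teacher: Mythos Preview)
Your outline is a correct sketch of the standard analytic proof of the prime number theorem in arithmetic progressions via Dirichlet $L$-functions, but note that the paper does not prove this statement at all: it is stated as the classical Dirichlet theorem (with a reference to \cite{Dir}) and invoked as a black box throughout Sections~\ref{s:Sec1}--\ref{s:Sec4}. So there is no ``paper's own proof'' to compare against; your argument is simply the textbook proof of a result the authors take for granted.
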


We construct a Toeplitz sequence $x\in\{0,1\}^{\Z}$
with the period structure $(n_t)$:
\begin{equation}\label{t1}
n_{t+1}=k_{t+1}n_t,\;(k_{t+1},n_t)=1\end{equation}
for each $t\geq1$. We will show that for this $x$:\
\begin{equation*}\label{pnt}
\lim_{t\to\infty}\frac1{\pi(n_t)}\sum_{p<n_t}F(S^px)\text{ does not exist,}
\end{equation*}
where $F(y)=(-1)^{y(0)}$.
At stage $t$, $x$ is approximated by the infinite concatenation of $x_t[0,n_t-1]\in\{0,1,?\}^{n_t}$ (that is, we see a periodic sequence of $0,1,?$ with period $n_t$). Successive ``?'' will be filled in the next steps of construction of $x$. We require that:
\begin{gather}
\label{t2}
\frac{\varphi(n_t)}{n_t}\leq\frac1{2^t},\\
\label{t3}
\{0\leq i<n_t:\: x_t(i)=?\}\subset \{0\leq j<n_t:(j,n_t)=1\},\\
\label{t4}
\#\{0\leq i<n_t:\:x_t(i)=?\}\geq\Big(1-\sum_{l=1}^t\tfrac{1}{100^l}\Big)\varphi(n_t),\\
\label{t5}
\#\{p<n_t:\:x_t(p)=?\}\geq\frac12 \pi(n_t).
%\footnotemark
\end{gather}
%\footnotetext{$p$ stands always for a prime number.}
%\begin{Remark} We will tacitly assume that $p<n_t$ is ``automatically'' coprime with $n_t$, as the number of the primes
%dividing $n_t$ is of order $\log n_t$ hence is negligible compared to $\pi(n_t)=n_t/\log n_t$.\end{Remark}
We choose $k_{t+1}$ satisfying~\eqref{t1} and:
\begin{gather}
\label{t6}
\frac{\varphi(k_{t+1})}{k_{t+1}}\leq\frac12,~\footnotemark\\
\label{t6 1/2}
\varphi(k_{t+1})\geq 100^{t+1},\\
\label{t6++}
8\log n_{t+1}\leq \pi(n_{t+1}),\quad 8\pi(n_{t})\leq \pi(n_{t+1})
\end{gather}
\footnotetext{Note that if $p_i$ stand for the $i$-th prime then $\sum_{j\geq i}1/p_j=+\infty$, whence remembering that $\varphi(p_ip_{i+1}\ldots p_{i+s})=p_ip_{i+1}\ldots p_{i+s}\prod_{j=0}^s(1-1/p_{i+j})$, we have $\prod_{j=0}^s(1-1/p_{i+j})\to 0$, and therefore $\prod_{j=0}^s(1-1/p_{i+j})<1/2$ for $s$ large enough.}
and {\bf for each} $0<a<n_t$, $(a,n_t)=1$, we have
\begin{equation}\label{t7}
\#\big(\{a+jn_t:\:j=0,\ldots,k_{t+1}\}\cap \PP\big)= \pi(n_{t+1};n_t,a)\leq 2\frac{\pi(n_{t+1})}{\varphi(n_t)}.
\end{equation}
The latter we obtain from  Theorem~\ref{thm:Diri} (remembering that $n_t$ is fixed, so the number of $a$ is known, we can obtain the accuracy as good as we want by taking $k_{t+1}$ sufficiently large).

We need two simple observations:

\begin{equation}\label{l1}
\{0\leq k<n_{t+1}:\:(k,n_{t+1})=1\}\subset\bigcup_{\substack{0\leq a<n_t\\ (a,n_t)=1}}
\{a+jn_t:\:j=0,\ldots,k_{t+1}-1\}.
\end{equation}

%\begin{Lemma}\label{lem:nKn_N}
%Suppose $n_N=n_K\cdot m\cdot m'$, where $(m,n_K)=1$ and for every prime $p$ we have $p|m'\Rightarrow p|n_K$.
%If  $(a,n_K)=1$ then for every $j\in\Z$ we have $(a+jn_K,n_N)=1$ iff $(a+jn_k,m)=1$. Moreover,
%\begin{align*}\#&\{0\leq j<mm':(a+jn_K,n_N)=1\}\\
%&=\#\{0\leq j<mm':(a+jn_K,m)=1\}=\varphi(m)m'.
%\end{align*}
%\end{Lemma}
%
%\begin{proof}
%The first part of the lemma follows directly from the definition of $m$ and $m'$.
%
%Since $n_K$ and $m$ are coprime, for every $0\leq l<m'$ the numbers $a+jn_K\mod m$, $lm\leq j<(l+1)m$
%are different remainders $\mod n_K$, so
%\[\#\{lm\leq j<(l+1)m:(a+jn_K,m)=1\}=\varphi(m).\]
%It follows that
%\[\#\{0\leq j<mm':(a+jn_K,m)=1\}=\varphi(m)m'.\]
%\end{proof}

\begin{Lemma}\label{l2}
For every $0\leq a<n_t$ with $(a,n_t)=1$, we have
\[\#\{0\leq j<k_{t+1}:(a+jn_t,n_{t+1})=1\}=\varphi(k_{t+1}).\]
\end{Lemma}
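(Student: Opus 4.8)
The plan is to reduce the coprimality condition modulo $n_{t+1}=k_{t+1}n_t$ to one modulo $k_{t+1}$ alone, and then to use that multiplication by $n_t$ is a bijection of $\Z/k_{t+1}\Z$ because $(k_{t+1},n_t)=1$. This is pure elementary number theory (a Chinese Remainder Theorem argument), so I do not expect any real obstacle; the only point requiring a little care is that the count is taken over the \emph{complete} residue system $\{0,1,\ldots,k_{t+1}-1\}$, which is precisely what yields $\varphi(k_{t+1})$ exactly rather than an approximate value.

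First I would fix $a$ with $0\le a<n_t$ and $(a,n_t)=1$ and observe that for every $j\in\Z$ one has $(a+jn_t,n_t)=(a,n_t)=1$, so $a+jn_t$ is automatically coprime to $n_t$. Since $n_{t+1}=k_{t+1}n_t$ with $(k_{t+1},n_t)=1$, this gives the equivalence
\[
(a+jn_t,n_{t+1})=1\iff (a+jn_t,k_{t+1})=1 .
\]
Hence the cardinality in question equals $\#\{0\le j<k_{t+1}:\ (a+jn_t,k_{t+1})=1\}$.

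Next I would consider the map $j\mapsto a+jn_t \bmod k_{t+1}$ on $\{0,1,\ldots,k_{t+1}-1\}$. Because $(n_t,k_{t+1})=1$, the residue class of $n_t$ is invertible in $\Z/k_{t+1}\Z$, so $j\mapsto a+jn_t$ is a bijection of $\Z/k_{t+1}\Z$; in particular, as $j$ runs through the complete residue system $\{0,1,\ldots,k_{t+1}-1\}$ the values $a+jn_t\bmod k_{t+1}$ run through a complete residue system mod $k_{t+1}$, each exactly once. Combining this with the equivalence above, the number of admissible $j$ equals the number of residues mod $k_{t+1}$ that are coprime to $k_{t+1}$, which is $\varphi(k_{t+1})$ by definition of the Euler totient function. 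This completes the argument; note that no use is made of any of the quantitative hypotheses \eqref{t1}--\eqref{t7} beyond the coprimality $(k_{t+1},n_t)=1$ from \eqref{t1}.
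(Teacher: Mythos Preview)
Your proof is correct and follows essentially the same approach as the paper: first reduce the coprimality with $n_{t+1}$ to coprimality with $k_{t+1}$ using $(a+jn_t,n_t)=(a,n_t)=1$, then observe that the affine map $j\mapsto a+jn_t$ is a bijection of $\Z/k_{t+1}\Z$ since $(n_t,k_{t+1})=1$, so the count equals $\varphi(k_{t+1})$. The paper's argument is organized the same way, only phrasing the second step as computing the preimage $A^{-1}(J)$ of the set $J$ of units under the bijection $A$.
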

\begin{proof}
First note that $(a+jn_t,n_{t+1})=1$ iff $(a+jn_t,k_{t+1})=1$. Indeed, assume that $(a+jn_t,k_{t+1})=1$.
If for some prime $p$ we have $p|(a+jn_t)$ and $p|n_{t+1}=n_tk_{t+1}$, then $p|k_{t+1}$. Otherwise, we have
$p|n_t$, so $p|a$. As $(a,n_t)=1$, this gives a contradiction. Thus $(a+jn_t,k_{t+1})=1$ implies $(a+jn_t,n_{t+1})=1$.
The opposite implication is obvious. Thus
\[\{0\leq j<k_{t+1}:(a+jn_t,n_{t+1})=1\}=\{0\leq j<k_{t+1}:(a+jn_t,k_{t+1})=1\}.\]
Let us consider the affine map
\[\Z/k_{t+1}\Z\ni j\stackrel{A}{\mapsto} a+jn_t\in \Z/k_{t+1}\Z.\]
If $J:=\{0\leq \ell<k_{t+1}:\:(\ell,k_{t+1})=1\}$ then
$$
\{0\leq j<k_{t+1}:\: (a+jn_t,k_{t+1})=1\}=A^{-1}(J).$$
Since $(n_t,k_{t+1})=1$, the map $A$ is a bijection. It follows that
\begin{align*}
\#\{0\leq j<k_{t+1}:(a+jn_t,k_{t+1})=1\}&=\#\{0\leq \ell<k_{t+1}:(\ell,k_{t+1})=1\}\\
&=\varphi(k_{t+1}),
\end{align*}
which completes the proof.
%
%By the ChRT, the map
%$$
%(a,j)\mapsto a+jn_t$$
%settles a ring isomorphism of $\Z/n_t\Z\times \Z/k_{t+1}\Z$ and $\Z/n_{t+1}\Z$. Now, the element $(a,j)$ is invertible iff $a$ is invertible in $\Z/n_t\Z$ and $j$ is invertible $\Z/k_{t+1}\Z$.
\end{proof}

We need to describe now which and how we fill "?" in
$x_{t+1}[0,n_{t+1}-1]$. This block is divided into $k_{t+1}$ subblocks
$$
\underbrace{x_t[0,n_t-1]x_t[0,n_t-1]\ldots x_t[0,n_t-1]}_{k_{t+1}}.$$
We fill in {\bf all} "?" in the first block $x_t[0,n_t-1]$ in such a way to ``destroy'' a PNT for the time $n_t$, namely
\begin{align*}
\frac1{\pi(n_t)}&\sum_{p<n_t}F(S^px)=\frac1{\pi(n_t)}\sum_{\substack{p<n_t\\p|n_t}}(-1)^{x(p)}+\\
&\frac1{\pi(n_t)}\Big(\sum_{\substack{p<n_t\\(p,n_t)=1\\x_t(p)=0}}1-
\sum_{\substack{p<n_t\\ (p,n_t)=1\\x_t(p)=1}}1+\sum_{\substack{p<n_t\\ (p,n_t)=1\\x_t(p)=?}}(-1)^{x(p)}\Big).
\end{align*}
As the number of the primes
dividing $n_t$ is bounded by $\log n_t$, it is negligible compared to $\pi(n_t)=n_t/\log n_t$.
It follows that
\[\Big|\frac1{\pi(n_t)}\sum_{\substack{p<n_t\\p|n_t}}(-1)^{x(p)}\Big|
\leq \frac{\log n_t}{\pi(n_t)}=o(1),\]
so the first summand does not affect the asymptotic of the averages in \eqref{pnt31}.
Since the number of $p$ in the last summand is at least $\frac12\pi(n_t)$ in view of~\eqref{t5}, we can fill in $x_{t+1}$ at places $\{p<n_t: (p,n_t)=1,\ x_t(p)=?\}$ to obtain the sum completely different that the known number which we had from stage $t-1$.

We fill in (in an arbitrary way) all remaining places between $0$ and $n_t-1$ and all places $a+jn_t$ for $0\leq j<k_{t+1}$ such that this number is not coprime with $n_{t+1}$, so that \eqref{t3} will be satisfied at stage $t+1$.
We must remember that for certain $0<a<n_t$ coprime to $n_t$, $x_t(a)$ was already defined at previous stages, so along the corresponding arithmetic progressions $a+jn_t$, $0\leq j<k_{t+1}$, these places are also filled in previously. On the other {hand},
if $x_{t+1}(a+jn_t)\neq ?$ (that is, $x_{t+1}(a+jn_t)=0$ or $x_{t+1}(a+jn_t)=1$)  and $(a+jn_t,n_{t+1})=1$ for some $0< j<k_{t+1}$ then $x_t(a)\neq ?$. In view of \eqref{l1}, it gives
\begin{align*}
\{&0\leq i<n_{t+1}:(i,n_{t+1})=1,x_{t+1}(i)\neq ?\}\\
&\subset\{0<a<n_{t}:(a,n_{t})=1,x_{t+1}(a)\neq ?\}\\
 &\quad\cup \bigcup_{\substack{0\leq a<n_t\\
(a,n_t)=1\\
x_t(a)\neq ?}}
\{a+jn_{t}:0<j<k_{t+1},\: (a+jn_t,n_{t+1})=1\}.
\end{align*}
By \eqref{t3}, Lemma~\ref{l2},  \eqref{t4} and \eqref{t6 1/2}, it follows that
\begin{align*}
\#&\{0\leq i<n_{t+1}:(i,n_{t+1})=1,x_{t+1}(i)\neq ?\}\\
&\leq\varphi(n_t)+
\#\{0\leq a<n_t:(a,n_t)=1,x_t(a)\neq ?\}\varphi(k_{t+1})\\
& \leq \varphi(n_t)+
\Big(\sum_{k=1}^t\frac{1}{100^k}\Big)\varphi(n_t)\varphi(k_{t+1})=\Big(\frac1{\varphi(k_{t+1})}+\sum_{k=1}^t\frac{1}{100^k}\Big)\varphi(n_{t+1})\\
&\leq \sum_{k=1}^{t+1}\frac{1}{100^k}\varphi(n_{t+1})\leq \frac{1}{99}\varphi(n_{t+1}).
\end{align*}
In particular, at stage $t+1$, also \eqref{t4} is satisfied.

Similar arguments show that
\begin{align*}
\{& p<n_{t+1}:x_{t+1}(p)\neq ?\}\subset \{p<n_{t+1}:p|n_{t+1}\}\cup\{p<n_{t}:x_{t+1}(p)\neq ?\}\\
&\cup \bigcup_{\substack{0\leq a<n_t\\
(a,n_t)=1\\
x_t(a)\neq ?}}
\{a+jn_{t}:0<j<k_{t+1},\: a+jn_t\in\PP\}.
\end{align*}
In view of \eqref{t7}, \eqref{t4} and \eqref{t6++}, it follows that
\begin{align*}
\#\{& p<n_{t+1}:x_{t+1}(p)\neq ?\}\\&\leq \log n_{t+1}+\pi(n_{t})
+2\#\{0\leq a<n_t: (a,n_t)=1, x_t(a)\neq ?\} \frac{\pi(n_{t+1})}{\varphi(n_t)}\\
&\leq\log n_{t+1}+\pi(n_{t})+\frac{2}{99}\varphi(n_t) \frac{\pi(n_{t+1})}{\varphi(n_t)}\leq \frac{1}{2}\pi(n_{t+1}).
\end{align*}
 Therefore, at stage $t+1$, also \eqref{t5} is satisfied.
%For the remaining $a<n_t$, $(a,n_t)=1$, the number of prime numbers contained in $\{a+jn_t:\: j=0,\ldots,,k_{t+1}-1\}$ is $\frac1{\varphi(n_t)}\frac{n_{t+1}}{\log n_{t+1}}$, so remembering that we have at our disposal at least $\frac{99}{100}\varphi(n_t)$ of such $a$, we obtain at least
%$$\geq \frac{99}{100}\varphi(n_t)\cdot \frac1{\varphi(n_t)}\frac{n_{t+1}}{\log n_{t+1}}\geq\frac{1}{2} \frac{n_{t+1}}{\log n_{t+1}}$$
%prime numbers which are positions not filled in at stage $t+1$, so
%\eqref{t5} is satisfied.

Finally, note that
\[
\frac{\varphi(n_{t+1})}{n_{t+1}}=\frac{\varphi(n_{t})}{n_{t}}
\frac{\varphi(k_{t+1})}{k_{t+1}}\leq \frac{\varphi(n_{t})}{n_{t}}\frac12,\]
so \eqref{t2} holds and the resulting Toeplitz sequence is regular.

%\begin{Question} Can we have a similar construction for a SPNT, other sets?\end{Question}
%\begin{Question} Is it true that a PNT holds whenever $|\{i<n_t:\:x_t(i)=?\}|/\varphi(n_t)\to0$?\end{Question}

{\section{Toeplitz subshifts for which a P$_l$NT does not hold}\label{s:SecPlNT}

We now intend to give an  example of a (regular) Toeplitz sequence $x$ such that a P$_l$NT does not hold for {the corresponding subshift. In fact,}
\begin{equation*}\label{plnt}
\lim_{t\to\infty}\frac1{\pi_l(n_t)}\sum_{p^{(l)}\in\mathbb{P}_l^{n_t}}F(S^{p^{(l)}}x)\text{ does not exist.}
\end{equation*}
For any natural $m$ and $0\leq a<m$, let
\[\pi_l(N;m,a):=\#(\mathbb{P}_l^{N}\cap(a+m\Z)).\]

\begin{Lemma}\label{lem:al1}
If $(a,m)>1$ then
\begin{equation}\label{eq:am>1}
\pi_l(N;m,a)=o(\pi_l(N)).
\end{equation}
If $(a,m)=1$ then
\begin{equation}\label{eq:am1}
\lim_{N\to\infty}\frac{\pi_l(N;m,a)}{\pi_l(N)}=\frac{1}{\varphi(m)}.
\end{equation}
\end{Lemma}

\begin{proof}
The proof is by induction on $l$. If $l=1$ and $(a,m)>1$ then
$\pi_l(N;m,a)\leq 1$, so \eqref{eq:am>1} holds. If  $(a,m)=1$ then \eqref{eq:am1} is given by Theorem~\ref{thm:Diri}.

Suppose that \eqref{eq:am>1} and \eqref{eq:am1} are satisfied for all parameters less than some natural number $l\geq 2$.
Assume that $(a,m)\in \mathbb{P}_j$ for some $j\geq 1$. If $j>l$ then $\pi_l(N;m,a)=0$.
If $(a,m)\in \mathbb{P}_l$ then $\pi_l(N;m,a)\leq 1$, so \eqref{eq:am>1} holds.
If $p^{(j)}:=(a,m)\in \mathbb{P}_j$ for some $1\leq j< l$ then,
\begin{align*}
\pi_l(N;m,a)&\leq \pi_{l-j}([N/p^{(j)}];m/p^{(j)},a/p^{(j)})=O(\pi_{l-j}(N))\\
&=O\Big(\frac{N(\log\log N)^{l-j-1}}{\log N}\Big)=o\Big(\frac{N(\log\log N)^{l-1}}{\log N}\Big)=o(\pi_l(N)).
\end{align*}

Now, suppose that $(a,m)=1$. Assume that $p_1\leq p_2\leq\ldots\leq p_l$ are prime numbers such that $p^{(l)}=p_1\cdots p_l\leq N$, $p^{(l)}=a\!\mod m$.
Then $p_1\leq \sqrt[l]{N}$. Since $(p_1,m)=1$, there exists a unique $0\leq a(p_1)<m$ such that $p_1\cdot a(p_1)=a\!\mod m$ and $(a(p_1),m)=1$.
Then
\[\pi_l(N;m,a)=\sum_{p_1\leq \sqrt[l]{N}}\pi_{l-1}([N/p_1];m,a(p_1)).\]
As $p_1\leq \sqrt[l]{N}$ implies $N/p_1\geq {N}^{1-1/l}$, by assumption, for every $\vep>0$ there exists $N_\vep$
such that for all $N\geq N_\vep$ and $p_1\leq \sqrt[l]{N}$ with $(p_1,m)=1$, we have
\[(1-\vep)\frac{\pi_{l-1}([N/p_1])}{\varphi(m)}<\pi_{l-1}([N/p_1];m,a(p_1))<(1+\vep)\frac{\pi_{l-1}([N/p_1])}{\varphi(m)}.\]
Since $\pi_l(N)=\sum_{p_1\leq \sqrt[l]{N}}\pi_{l-1}([N/p_1])$, it follows that
\[(1-\vep)\frac{\pi_l(N)}{\varphi(m)}<\pi_l(N;m,a)<(1+\vep)\frac{\pi_l(N)}{\varphi(m)}\]
for every $N\geq N_\vep$, so we have \eqref{eq:am1}.
\end{proof}

\begin{Lemma}\label{lem:al2}
For every $l\geq 2$, we have
\begin{equation}
\#\{p^{(l)}\in \mathbb{P}_l^{N}:(p^{(l)},N)>1\}=o(\pi_l(N)).
\end{equation}
\end{Lemma}
%\textcolor{magenta}{I've changed $N^l$ to $N$.}
\begin{proof}
{Notice that $\#\{p^{(l)}\in \mathbb{P}_l^{N}\!:\!(p^{(l)},N)>1\}\leq \sum_{p|N} \pi_{l-1}(\frac{N}{p})$. Therefore,  using \eqref{land},
\begin{align*}
\#\{p^{(l)}\in \mathbb{P}_l^{N}:(p^{(l)},N)>1\}&={O}\Big(\sum_{p|N} \frac{N/p}{\log (N/p)}\frac{(\log\log (N/p))^{l-2}}{(l-2)!}\Big)\\
&=
{O}\Big(\frac{N}{\log N} \frac{(\log\log (N))^{l-1}}{(l-1)!} \frac{(l-1)}{\log \log N}\sum_{p|N} \frac{\log N}{p\log (N/p)}\Big).
\end{align*}
So again, by \eqref{land}, the result will follow by showing that
\[
\frac{1}{\log \log N}\sum_{p|N} \frac{\log N}{p\log (N/p)}=o(1).
\]
Note that
\[
\sum_{p|N} \frac{\log N}{p\log (N/p)}=\sum_{\substack{p|N\\p\leq N^{1/2}}} \frac{\log N}{p\log (N/p)}+\sum_{\substack{p|N\\p> N^{1/2}}} \frac{\log N}{p\log (N/p)},
\]
and that the second term contains at most one prime $p$. Moreover, as $l\geq 2$ and $(p^{(l)}, N)>1$, the number $N$ is not prime, so $N/p\geq 2$.   Using this, we get
\[
\sum_{p|N} \frac{\log N}{p\log (N/p)}\leq {2}\sum_{p|N}\frac{1}{p}+\frac{1}{\log 2}\frac{\log N}{N^{1/2}}={O}(\log \log \log N),
\]
as $\sum_{p|N}\frac{1}{p}={O}(\log \log \log N)$, see e.g.\ \cite{KLR0}. This finishes the proof.}
%Recall that if $p^{(l)}\in \mathbb{P}_l$, $p^{(l)}<{N^l}$ with $(p^{(l)},N^l)>1$ then $p^{(j)}:=(p^{(l)},N^l)\in \mathbb{P}_l$ for some $1\leq j\leq l$.
%Moreover, if $p^{(j)}=p_1\cdots p_j$ then $p_i|N$ for every $1\leq i\leq j$, so $p^{(j)}\leq N^j$ and $N^l/p^{(j)}\geq N^{l-j}$.
%Therefore
%\begin{align*}
%\#&\{p^{(l)}\in \mathbb{P}_l^{N^l},(p^{(l)},N^l)>1\}\leq \sum_{j=1}^l \sum_{p^{(j)}\in \mathbb{P}_j, p^{(j)}|N^l}\pi_{l-j}(N^l/p^{(j)})\\
%&\leq \sum_{p^{(l)}\in \mathbb{P}_l, p^{(l)}|N^l}1+\sum_{j=1}^{l-1} \sum_{p^{(j)}\in \mathbb{P}_j, p^{(j)}|N^l}O\Big(\frac{N^l/p^{(j)}(\log\log(N^l/p^{(j)}))^{l-j-1}}{\log(N^l/p^{(j)})}\Big)\\
%&\leq\#\{p\in \mathbb{P}:p|N\}^l+\sum_{j=1}^{l-1} \frac{N^l(\log\log(N^{l-j}))^{l-j-1}}{\log(N^{l-j})}
%\sum_{p^{(j)}\in \mathbb{P}_j, p^{(j)}|N^l}O\Big(\frac{1}{p^{(j)}}\Big)\\
%&\leq \log^l N +O\Big(\sum_{j=1}^{l-1} \frac{N^l(\log\log(N^{l-j}))^{l-j-1}}{\log(N^{l-j})} \Big(\sum_{p|N}\frac{1}{p}\Big)^j\Big).
%\end{align*}
%where $\pi_0(n)=1$.
%Now, using that $\sum_{p|k}\frac1p\ll \log\log\log k$, see e.g.\ \cite{KLR0}, it follows that
%\begin{align*}
%\#&\{p^{(l)}\in \mathbb{P}_l^{N^l},(p^{(l)},N^l)>1\}\\
%&= O\Big(\log^l N +\frac{N^l}{\log(N^l)}\sum_{j=1}^{l-1}(\log\log\log(N))^j(\log\log(N^{l}))^{l-j-1}\Big)=o(\pi_l(N^l)).
%\end{align*}
\end{proof}

Now, we repeat the scheme of the construction from Section~\ref{s:Sec1} almost word for word, although we have to take care how to choose $k_{t+1}$.

First of all, we require that $k_{t+1}$ is
%the $l$-power of a natural number and is
large enough so that
\begin{gather}
\pi_l(n_{t+1};n_t,a)\leq 2\frac{\pi_l(n_{t+1})}{\varphi(n_t)}\text{ for every }0\leq a<n_t\text{ with } (a,n_t)=1,\label{eq:g1}\\
\sum_{0\leq a<n_t,(a,n_t)>1}\pi_l(n_{t+1};n_t,a)\leq \frac{\vep}{8}\pi_l(n_{t+1}),\label{eq:g2}\\
\#\{p^{(l)}\in \mathbb{P}_l^{n_{t+1}},(p^{(l)},n_{t+1})>1\}=o(\pi_l(n_{t+1})).\label{eq:g3}
\end{gather}
The existence of such $k_{t+1}$ is guaranteed by Lemmas~\ref{lem:al1}~and~\ref{lem:al2}.

Next, we replace~\eqref{t5} by
\begin{equation*}\label{t5a}
\#\{p^{(l)}\in\PP_l^{n_t}:\:x_t(p^{(l)})=?\}\geq\frac12\pi_l(n_t)
\end{equation*}
and requiring (instead of~\eqref{t7}) that for
$(a,n_t)=1$, we have
\begin{equation*}\label{t7a}
\#(\{a+jn_t:\:0\leq j <k_{t+1}\}\cap \PP_l)\leq 2 \frac{\pi_l(n_{t+1})}{\varphi(n_t)},
\end{equation*}
cf.\ \eqref{eq:g1}.
Furthermore, we replace~\eqref{t6++} by the requirement that
\[
\#\{p^{(l)}\in\PP_l^{n_t}:p^{(l)} \equiv a \operatorname{mod}{n_t}\text{ with }(a,n_t)>1\}\leq\frac18\pi_l(n_{t+1}),\]
cf.\ \eqref{eq:g2}.
To carry over the previous proof, it remains to show that
\[
\frac1{\pi_{l}(n_t)}\sum_{p^{(l)}\in\PP_l^{n_t},(p^{(l)},n_t)>1}(-1)^{x(p^{(l)})}
=o(1).\]
This follows from \eqref{eq:g3} applied in the previous step of the construction.}

\section{Regular Toeplitz subshifts which satisfy a PNT (proof of Theorem~B)}\label{s:Sec3}
Let $x\in\mathcal{A}^\Z$ be a regular Toeplitz sequence. Then, for every $k\in\N$, there is
an $n_k$-periodic sequence $x_k\in(\mathcal{A}\cup\{?\})^\Z$ so that
\[x_k(j)\neq ?\text{ implies }x(j)=x_k(j)=x_l(j)\text{ for all }l\geq k\]
and
\[?_k=?_k(x):=\#\{0\leq j<n_k:x_k(j)=?\}=o(n_k).\]

For every Toeplitz sequence $x\in\mathcal{A}^\Z$ and natural $m$ let us consider a new Toeplitz
sequence $x^{(m)}\in(\mathcal{A}^{2m+1})^\Z$ given by
\[x^{(m)}(j)=(x(j-m),\ldots,x(j+m))\text{ for every }j\in\Z.\]
If $(n_t)_{t\geq 1}$ is a periodic structure of $x$, then it is also a periodic structure of $x^{(m)}$. Moreover,
\begin{equation}\label{ineq:?}
?_k(x^{(m)})\leq (2m+1)?_k(x)\text{ for every }k\geq 1.
\end{equation}
Hence, the regularity of $x$ implies the regularity of $x^{(m)}$.

%We make a list of the following assumptions, that will be useful in stating our results:
%\begin{enumerate}
%\item[(0)] $k_{t+1}\leq k_t!$ for $t$ sufficiently large;
%\item[(A)] for every $\epsilon>0$ there exists $K_\epsilon\in \N$ such that for every $k,\ell\geq K_\epsilon$,
%$$
%\sup_{r\geq 0}\Big|\frac{1}{\varphi(n_k)}\sum_{\substack{a<n_k\\ (a,n_k)=1}}F(T^{a-r}x)-\frac{1}{\varphi(n_\ell)}\sum_{\substack{a<n_\ell\\ (a,n_\ell)=1}}F(T^{a-r}x)\Big|<\epsilon;
%$$
%\item[(B1)] there exists $\delta>0$ such that
%$?_k=o(n_k^{1-\delta})$;
%\item[(B2)]$?_k=o(n_k (\log n_t)^{-1})$;
%\item[(B!)]$?_k=o(\varphi(n_k))$.
%\end{enumerate}

Theorem~B follows directly from Lemma~\ref{lem:?Et} and the following result.

\begin{Th}\label{thm:PNT}
Suppose that $(X_x,S)$ is a Toeplitz system such that
\begin{equation*}
?_k=o(\varphi(n_k)).
\end{equation*}
Then $(X_x,S)$ satisfies a PNT.
\end{Th}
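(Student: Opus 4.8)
\textbf{Proof plan for Theorem~\ref{thm:PNT}.}

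The plan is to fix $x\in X_x$ and $f\in C(X_x)$ and show that $\frac1{\pi(N)}\sum_{p<N}f(S^px)$ converges. Since the levels of the towers $E^t$ are clopen and their union over all $t$ generates the topology on $X_x$ (every point of $X_x$ lies in the intersection of a nested sequence of levels), any $f\in C(X_x)$ can be approximated uniformly by a function $g$ that is constant on the levels of $E^t$ for $t$ large; more precisely, given $\eps>0$ we pick $t$ so that $\|f-g_t\|_\infty<\eps$, where $g_t$ takes the value of $f$ at some chosen point in each level $E^t_j$. This reduces the problem to the case $f=\raz_{E^t_j}$, i.e.\ to understanding $\frac1{\pi(N)}\#\{p<N:S^px\in E^t_j\}$, and then to letting $t\to\infty$ at the end. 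The subtlety (and this is exactly where the hypothesis $?_k=o(\varphi(n_k))$ enters, rather than the weaker $?_k=o(n_k)$) is that a primitive level $E^t_j$ is \emph{not} simply the preimage under $\pi$ of $D^t_j$ together with a symbol constraint — within each level one still sees ``$?$'' positions whose eventual filling is unknown — so a uniform approximation of $\raz_{E^t_j}$ by cylinder functions must be done carefully.

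The main computation is the following. We have $S^px\in E^t_j$ iff $\pi(S^px)\in D^t_j$, i.e.\ iff $p\equiv j-r \pmod{n_t}$ where $r$ is the (fixed) coordinate of $\pi(x)$ in the odometer's $t$-th level; call this residue $a=a(x,t,j)$. By the prime number theorem in arithmetic progressions (Theorem~\ref{thm:Diri}), for each such $j$ with $(a,n_t)>1$ the count $\#\{p<N:p\equiv a\!\!\pmod{n_t}\}$ is $O(1)$ (at most one prime, essentially), hence contributes $O(1/\pi(N))=o(1)$; there are only boundedly many such $j$ (at most $n_t$ of them, a fixed number). For the residues $a$ with $(a,n_t)=1$, Dirichlet gives $\#\{p<N:p\equiv a\!\!\pmod{n_t}\}\sim \pi(N)/\varphi(n_t)$, so
\[
\frac1{\pi(N)}\sum_{p<N}f(S^px)
=\frac1{\varphi(n_t)}\sum_{\substack{0\le a<n_t\\(a,n_t)=1}} f\big(\text{point in }E^t_{j(a)}\big)\;+\;o(1)\;+\;O(\eps),
\]
up to one more error term coming from the ``$?$'' positions inside the levels. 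Specifically, when I approximate $f$ by a cylinder function I may be wrong about $f(S^px)$ precisely when $p$ lands in a level-position of $E^t$ that is still a ``$?$''; the number of residues $a$ coprime to $n_t$ that correspond to such positions is at most $?_t+(2m+1)\cdot(\text{something})$, and since these are residues coprime to $n_t$ the prime count along each is $\sim\pi(N)/\varphi(n_t)$, so the total error is $O(?_t/\varphi(n_t))\cdot\|f\|_\infty=o(1)$ by hypothesis — passing to $x^{(m)}$ via \eqref{ineq:?} if one wants to control a window of radius $m$. This is the one place where $o(\varphi(n_k))$, and not merely $o(n_k)$, is needed: dividing by $\pi(N)$ replaces $n_t$ by $\varphi(n_t)$ in the denominator.

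Putting the pieces together, for each fixed $t$ (large enough that $\|f-g_t\|_\infty<\eps$) we get
\[
\limsup_{N\to\infty}\Big|\frac1{\pi(N)}\sum_{p<N}f(S^px)-c_t\Big|\le C\eps,\qquad
c_t:=\frac1{\varphi(n_t)}\sum_{\substack{0\le a<n_t\\(a,n_t)=1}}g_t\big(\text{point of }E^t_{j(a)}\big),
\]
and a Cauchy argument on $(c_t)$ (the towers $E^{t+1}$ refine $E^t$, and $\|g_{t+1}-g_t\|_\infty$ is small once $t$ is large, so $|c_{t+1}-c_t|$ is small) shows $c_t\to c$ for some $c$ independent of everything but $x$; hence $\frac1{\pi(N)}\sum_{p<N}f(S^px)\to c$. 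I expect the main obstacle to be the bookkeeping around the ``$?$'' positions: one must argue that the set of coprime residues modulo $n_t$ hitting unfilled positions of the tower (and, for the window version, of $x^{(m)}$) has cardinality $O(?_t)$ rather than $O(n_t)$, so that after division by $\pi(N)\sim \pi(N)/\varphi(n_t)\cdot\varphi(n_t)$ the contribution vanishes — this is the crux, and it is exactly what forces the strengthened regularity hypothesis.
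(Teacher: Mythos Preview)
Your core ingredients --- Dirichlet's theorem in progressions, controlling the ``?'' positions via $?_k/\varphi(n_k)\to 0$, and passing to $x^{(m)}$ to handle windows of radius $m$ --- are correct and are exactly what the paper uses. But your opening reduction has a genuine gap.

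You claim that any $f\in C(X_x)$ can be uniformly approximated by a function constant on the levels of $E^t$. This is false: as the paper states explicitly in the introduction, the diameters of the levels $E^t_j$ do \emph{not} tend to zero unless $x$ is periodic. The levels $E^t_j=\pi^{-1}(D^t_j)$ only separate points that have distinct images in the odometer $H$; since $\pi$ is merely almost 1-1, level-constant functions are not dense in $C(X_x)$. This also undermines your final Cauchy step, since $\|g_{t+1}-g_t\|_\infty$ need not be small. There is a further internal sign that something is off: once you have reduced to $f=\raz_{E^t_j}$, the prime average is computed \emph{exactly} by Dirichlet (membership in $E^t_j$ depends only on $p\bmod n_t$), so the hypothesis $?_k=o(\varphi(n_k))$ would never be invoked --- your reduction has thrown away the very information the hypothesis is meant to control.

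The paper instead approximates $f$ by \emph{cylinder} functions (depending on finitely many coordinates), which \emph{are} uniformly dense in $C(X_x)$. For $F$ depending only on the zero coordinate one has $F(S^{p+r}x)=F(S^{(p+r)\bmod n_k}x)$ whenever $x_k((p+r)\bmod n_k)\neq{?}$, and it is precisely on the ``?'' residues that the value is undetermined. Your counting argument then applies verbatim: at most $?_k$ residues are ``?'', each coprime residue carries $\sim\pi(N)/\varphi(n_k)$ primes, so the uncontrolled contribution is $O(?_k/\varphi(n_k))\cdot\|F\|_{\sup}$. This yields the key estimate \eqref{eq:p-a}, from which the Cauchy condition follows immediately (the right-hand side of \eqref{eq:p-a} does not depend on $N$). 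The extension to radius-$m$ cylinders via $x^{(m)}$ and \eqref{ineq:?} is exactly as you sketch.
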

\begin{proof}
To show a PNT for $(X_x,S)$, {it suffices to show that} for every continuous $F:X_x\to\C$ and every
$\vep>0$ there exists $N_{\vep}$ so that for every $N,M\geq N_{\vep}$ and every $r\in\Z$, we have
\begin{equation}\label{eq:cauch}
\Big|\frac1{\pi(N)}\sum_{p\leq N}F(S^{p+r}x)- \frac1{\pi(M)}\sum_{p\leq M}F(S^{p+r}x)\Big|<\vep.
\end{equation}
{Note that the above is stronger than what is needed as it shows that the convergence in~\eqref{pnt31} is uniform in $y\in X_x$.}
We  first assume that $F:X_x\to\R$ depends only on the zero coordinate, i.e.\ $F(y)=f(y(0))$ for some $f:\mathcal{A}\to\R$.
%Let us extend $f$ to $f:\mathcal{A}\cup\{?\}\to\R$ such that $f(?)=0$.

%We will first argue assuming that $(B1)$ holds.
Fix $\vep>0$. Fix also $k\geq 1$ so that
\begin{equation}\label{tralala}
?_k<\frac{\vep}{8}\varphi(n_k).
\end{equation}
Next choose $N_\vep$ such that for every $N\geq N_\vep$, we have
\begin{gather}
\label{eq:dir}
\Big|\pi(N;n_k,a)-\frac{\pi(N)}{\varphi(n_k)}\Big|<\frac{\vep}{8}\frac{\pi(N)}{\varphi(n_k)}\text{ for all }a\in\Z\text{ with }(a,n_k)=1,\\
\label{eq:loglog}
\#\{p\leq N:p| n_{k}\}\leq \log n_k<\frac{\vep}{8}\pi(N).
\end{gather}
We will show that for all $N\geq N_\vep$ and $r\in\Z$ we have
\begin{equation}\label{eq:p-a}
\Big|\frac{1}{\pi(N)}\sum_{p\leq N}F(S^{p+r}x)-\frac{1}{\varphi(n_{k})}\sum_{\substack{0\leq a<n_{k}\\(a-r,n_{k})=1\\x_k(a)\neq ?}}F(S^{a}x)\Big|\leq \vep\|F\|_{\sup},
\end{equation}
which implies \eqref{eq:cauch}.
% to be specified later and take $K>K_{\vep}$. Let $n_{t_K}$ be such that
%\begin{equation}\label{eq:z1}
%\log^{4/\delta} K>n_{t_K}>\log^{2/\delta}K.
%\end{equation}
% \textcolor{red}{Existence of  $n_{t_K}$ should follow from (0). Maybe one needs to change (0) to indeed make the above hold.}.

Recall that $x_{k}\in(\mathcal{A}\cup\{?\})^\Z$ is an $n_{k}$-periodic sequence (used to construct $x$ at stage $k$). If for some $a\in\Z$ we have
\[
x_{k}(a)\neq\; ?,
\]
then
\[
x(a+j\cdot n_{k})=x_{k}(a)\text{ for every }j\in\Z.
\]
This implies that if $p\leq N$ and $x_{k}(p+r \!\mod n_{k})\neq \;?$, then
\begin{equation}\label{eq:z2}
F(S^{p+r}x)=F(S^{p+r \!\mod n_{k}}x).
\end{equation}
Note that
\begin{align*}
\#\{&p\leq N: x_{k}(p+r\!\mod n_{k})=\;?\}\\
&\leq
 \sum_{\substack{0\leq a<n_k\\
(a-r,n_k)=1\\
x_k(a)=?}}\#\{p\leq N: p=a-r\!\mod n_{k}\}\\
&\quad+
 \sum_{\substack{0\leq a<n_k\\
(a-r,n_k)>1}}\#\{p\leq N: p=a-r\!\mod n_{k}\}.
\end{align*}

Assume that $N\geq N_\vep$. By \eqref{eq:dir} and \eqref{eq:loglog}, for every integer $v$ with $(v,n_k)=1$ we have
\[\#\{p\leq N:p=v\!\mod n_{k}\}=\pi(N;n_k,v)\leq (1+\vep/8)\frac{\pi(N)}{\varphi(n_k)}\]
and
\begin{equation}\label{eq:()>1}
\sum_{\substack{0\leq a<n_k\\
(a-r,n_k)>1}}\#\{p\leq N: p=a-r\!\mod n_{k}\}\leq \#\{p\leq N:p| n_{k}\}<\frac{\vep}{8}\pi(N),
\end{equation}
where left inequality follows from the fact that if $(a-r,n_k)>1$ and $p_a=a-r \!\mod n_{k}$ for a prime $p_a$, then $(a-r,n_k)=p_a$ and
\[\{p\leq N:\:p=a-r \!\mod n_{k}\}=\{p_a\}.\]
It follows that (use also~\eqref{tralala})
\begin{align*}
\#&\{p\leq N: x_{k}(p+r\!\mod n_{k})=\;?\}\\
&\leq \#\{0\leq a<n_k:(a-r,n_k)=1,x_k(a)=?\}(1+\vep/8)\frac{\pi(N)}{\varphi(n_k)}+\frac{\vep}{8}\pi(N)
\\
&\leq ?_k(1+\vep/8)\frac{\pi(N)}{\varphi(n_k)}+\frac{\vep}{8}\pi(N)\leq \frac{\vep}{2}\pi(N).
\end{align*}
Let
\[
P_N:=\{p\leq N: x_{k}(p+r\!\mod n_{k})\neq\;?\}.
\]
Then by the above, for every $N\geq N_{\vep}$,
\begin{align}\label{eq:P_N}
\Big|\frac{1}{\pi(N)}\sum_{p\leq N}F(S^{p+r}x)- \frac{1}{\pi(N)}\sum_{p\in P_N}F(S^{p+r}x)\Big|\leq \frac{\vep}{2}\|F\|_{\sup}.
\end{align}
But by \eqref{eq:z2},
\begin{align*}
\sum_{p\in P_N}F(S^{p+r}x)&=\sum_{\substack{0\leq a<n_{k}\\
x_k(a)\neq ?}}\sum_{\substack{p\leq N\\p\equiv a-r \!\mod n_{k}}}F(S^{a}x)\\
&=
\sum_{\substack{0\leq a<n_{k}\\x_k(a)\neq ?}}F(S^{a}x)\#\{p\leq N,p=a-r \!\mod n_{k}\}.
\end{align*}
If $(a-r,n_k)=1$, then again by \eqref{eq:dir}, we have
\[\Big|\#\{p\leq N,p=a-r \!\mod n_{k}\}-\frac{\pi(N)}{\varphi(n_k)}\Big|=\Big|\pi(N;n_k,a-r)-
\frac{\pi(N)}{\varphi(n_k)}\Big|<
\frac{\vep}{8}\frac{\pi(N)}{\varphi(n_k)}.\]
In view of \eqref{eq:()>1}, it follows that
\begin{align*}
&\Big|\frac{1}{\pi(N)}\sum_{p\in P_N}F(S^{p+r}x)-\frac{1}{\varphi(n_k)}\sum_{\substack{0\leq a<n_{k}\\(a-r,n_k)=1\\x_k(a)\neq ?}}F(S^{a}x)\Big|\\
&=\Big|\sum_{\substack{0\leq a<n_{k}\\x_k(a)\neq ?}}F(S^{a}x)\frac{\pi(N;n_k,a-r)}{\pi(N)}-\frac{1}{\varphi(n_k)}\sum_{\substack{0\leq a<n_{k}\\
(a-r,n_k)=1\\x_k(a)\neq ?}}F(S^{a}x)\Big|\\
&\leq\frac{1}{\pi(N)}\sum_{\substack{0\leq a<n_{k}\\(a-r,n_k)=1\\x_k(a)\neq ?}}|F(S^{a}x)|\Big|\pi(N;n_k,a-r)-\frac{\pi(N)}{\varphi(n_k)}\Big|+\frac{\vep}{8}\|F\|_{\sup}\\
&\leq\|F\|_{\sup}\Big(\frac{\vep}{8}\frac{\#\{0\leq a<n_{k}:x_k(a)\neq ?,(a-r,n_k)=1\}}{\varphi(n_k)}+\frac{\vep}{8}\Big)
\leq \|F\|_{\sup}\frac{\vep}{2}.
\end{align*}
Together with \eqref{eq:P_N}, this gives \eqref{eq:p-a}, which completes the proof in the case of $F$ depending only on the zero coordinate.

Now suppose that $F:X_x\to\C$ depends only on finitely many coordinates. Then there exists natural $m$ and $f:\mathcal{A}^{2m+1}\to\C$ such that
$F(y)=f(y(-m),\ldots,y(m))$ for every $y=(y(k))_{k\in\Z}\in X_x$.
Denote by $X_{x^{(m)}}\subset(\mathcal{A}^{2m+1})^{\Z}$ the orbit closure
of $x^{(m)}\in (\mathcal{A}^{2m+1})^{\Z}$. Then every $y^{(m)}\in X_{x^{(m)}}$ is of the form $y^{(m)}(k)=(y(k-m),\ldots,y(k+m))$ for some $y=(y(k))_{k\in\Z}\in X_x$.

In view of \eqref{ineq:?}, $(X_{x^{(m)}},S)$ is a regular Toeplitz shift with $?_k(x^{(m)})=o(\varphi(n_k))$.
Let us consider $\bar{F}:X_{x^{(m)}}\to\C$ given by $\bar{F}(y^{(m)})=f(y^{(m)}(0))=f(y(-m),\ldots,y(m))$ for $y^{(m)}\in X_{x^{(m)}}$.
Since $\bar{F}$ depends only on the zero coordinate, by \eqref{eq:cauch} applied to $x^{(m)}$ and the map $\bar{F}$, for every $\vep>0$ there exists $N_\vep$ such that for $N,M\geq N_\vep$, we have
\begin{align*}
\Big|&\frac1{\pi(N)}\sum_{p\leq N}F(S^{p+r}x)- \frac1{\pi(M)}\sum_{p\leq M}F(S^{p+r}x)\Big|\\
&=
\Big|\frac1{\pi(N)}\sum_{p\leq N}\bar{F}(S^{p+r}x^{(m)})- \frac1{\pi(M)}\sum_{p\leq M}\bar{F}(S^{p+r}x^{(m)})\Big|<\vep.
\end{align*}
Thus \eqref{eq:cauch} holds for every $F:X_x\to\C$ depending only on finitely many coordinates. As the set of such functions is dense in $C(X_x)$,  \eqref{eq:cauch} also holds for every $F\in C(X_x)$, which completes the proof.
\end{proof}

As $\varphi(n)\to\infty$ when $n\to\infty$, we obtain the following result.

\begin{Cor}\label{c:finite?}
If $x$ is Toeplitz  for which the sequence $(?_k)$ is bounded then $(X_x,S)$ satisfies a PNT.\end{Cor}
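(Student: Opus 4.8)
The plan is to read this off Theorem~\ref{thm:PNT} directly: that theorem requires only the growth condition $?_k=o(\varphi(n_k))$, so the whole content of the corollary is that a uniform bound on the sequence $(?_k)$ already forces this. Thus there is essentially nothing to do beyond one elementary observation about the Euler totient function.

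First I would dispose of the degenerate case in which $x$ is (eventually) periodic: then $X_x$ is a finite system and a PNT holds by the classical prime number theorem in arithmetic progressions, exactly as recalled in the Introduction for cyclic systems. So I may assume $x$ is aperiodic, in which case every periodic structure $(n_k)$ of $x$ satisfies $n_k\mid n_{k+1}$ and $n_k\to\infty$. Next I would invoke the elementary fact that $\varphi(n)\to\infty$ as $n\to\infty$ --- e.g.\ $\varphi(n)\ge\sqrt{n}$ for $n>6$, or simply that the largest prime factor $p$ of $n$ tends to infinity together with $n$ while $\varphi(n)\ge p-1$. Hence $\varphi(n_k)\to\infty$, and if $?_k\le C$ for all $k$ then $0\le ?_k/\varphi(n_k)\le C/\varphi(n_k)\to 0$, that is, $?_k=o(\varphi(n_k))$. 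Theorem~\ref{thm:PNT} now applies verbatim and delivers a PNT for $(X_x,S)$.

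There is no real obstacle here; the only two points worth spelling out are the bound $\varphi(n)\to\infty$ and the remark that the periodic case should be handled by the classical PNT in arithmetic progressions rather than by Theorem~\ref{thm:PNT} (whose proof implicitly uses $n_k\to\infty$). It is also worth noting that, via the identification of $?_k$ with the tower data (Lemma~\ref{lem:?Et}), Corollary~\ref{c:finite?} in particular covers every regular Toeplitz subshift for which $\delta(E^k)$ stays bounded, a regime strictly inside the scope of Theorem~B.
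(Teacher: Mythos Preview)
Your proposal is correct and follows exactly the paper's own argument: the paper derives the corollary in one line, writing ``As $\varphi(n)\to\infty$ when $n\to\infty$, we obtain the following result,'' and then applies Theorem~\ref{thm:PNT}. Your separate treatment of the periodic case is a harmless bit of extra care --- in that case $?_k=0$ eventually, so $?_k=o(\varphi(n_k))$ holds trivially and Theorem~\ref{thm:PNT} still applies without a detour through the cyclic case.
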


{
\section{Toeplitz subshifts for which a P$_l$NT holds}\label{s:Sec4}
\begin{Th}\label{thm:PlNT}
Suppose that $(X_x,S)$ is a Toeplitz system such that
\begin{equation*}
?_k=o(\varphi(n_k)).
\end{equation*}
Then, for every $F\in C(X_x)$ and $y\in X_x$, the limit
\[\lim_{N\to\infty}\frac{1}{\pi_l(N)}\sum_{p^{(l)}\in\PP_l^{N}}F(S^{p^{(l)}}y)\text{ exists.}\]
\end{Th}

\begin{proof}
The proof proceeds along the same lines as the proof of Theorem~\ref{thm:PNT}. It relies on the
following analogue of \eqref{eq:p-a}: for every $\vep>0$ there exists a natural $N_\vep$ such that for all $N\geq N_\vep$ and $r\in\Z$, we have
\begin{equation}\label{eq:pp-a}
\Big|\frac{1}{\pi_l(N)}\sum_{p^{(l)}\in\PP_l^{N}}F(S^{p^{(l)}+r}x)-\frac{1}{\varphi(n_{k})}\sum_{\substack{0\leq a<n_{k}\\(a-r,n_{k})=1\\x_k(a)\neq ?}}F(S^{a}x)\Big|\leq \vep\|F\|_{\sup}.
\end{equation}
In turn, the proof of \eqref{eq:p-a} is based on only two elements: \eqref{eq:dir} and \eqref{eq:()>1}. Their {$l$-almost prime}
 counterparts follow directly from \eqref{eq:am1} and \eqref{eq:am>1}, respectively. Now, we repeat
the arguments of the proof of \eqref{eq:p-a} almost word for word, replacing \eqref{eq:dir} and \eqref{eq:()>1} by their
$l$-almost prime counterparts.
\end{proof}

\begin{Remark}
In view of \eqref{eq:p-a} and \eqref{eq:pp-a}, under the assumption $?_k=o(\varphi(n_k))$, we have
\[\lim_{N\to\infty}\frac{1}{\pi_l(N)}\sum_{p^{(l)}\in\PP_l^N}F(S^{p^{(l)}}y)=\lim_{N\to\infty}\frac{1}{\pi(N)}\sum_{p<N}F(S^{p}y)\]
for every $F\in C(X_x)$ and $y\in X_x$, so a PNT and a P$_l$NT fully coincide for this class of regular Toeplitz systems.
\end{Remark}}

\section{Ergodic averages along polynomial times}\label{s:Sec5}
Let $P$ be a monic polynomial\footnote{The leading coefficient of $P$ equals~1. This assumption is only for simplicity. In fact, Theorem~\ref{thm:posP} below is true whenever the set of (non-zero) coefficients of $P-P(0)$ is coprime, see the proof of Corollary~\ref{cor:rhoP} and the assumptions of Albis theorem in \cite{Nar}.} of degree $d>1$ with non-negative integer coefficients. Note that, under these assumptions, $P(\cdot)$ is a strictly increasing function on $\N$.
For every $n\in\N$, let
\[R^{P}_n:=\{0\leq a<n: a=P(m)\!\mod n\text{ for some }m\in\N\}\text{ and }\psi^P(n):=\# R^P_n.\]
For all $N,n\in\N$ and $a\in R^P_n$, let
\[\rho^P(N;n,a)=\#\{1\leq m\leq N:P(m)=a\!\mod n\}.\]
and
\[\rho^P(n,a):=\rho^P(n;n,a),\;\;\rho^P(n):=\max_{a\in R^P_n}\rho^P(n;n,a).\]

\begin{Lemma}\label{lem:psi}
The function $\psi^P$ is multiplicative, i.e.\ $\psi^P(n_1n_2)=\psi^P(n_1)\psi^P(n_2)$ if $(n_1,n_2)=1$.
If $a\in\Z/n\Z$, $n_1,\ldots,n_k$ are pairwise coprime and $n=n_1\cdots n_k$ then $a\in R^P_{n}$ iff $a_i\in R^P_{n_i}$ for $i=1,\ldots,k$,
where $0\leq a_i<n_i$ is the remainder of $a$ when divided by $n_i$ (that is, $0\leq a_i<n_i$ and $a_i=a$ mod~$n_i$). Moreover,
\begin{equation}\label{eq:rhoP}
\rho^P(n,a)=\prod_{i=1}^k\rho^P(n_i,a_i).
\end{equation}
\end{Lemma}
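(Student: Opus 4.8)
The plan is to reduce everything to the Chinese Remainder Theorem applied to the polynomial congruence $P(m)\equiv a\pmod n$. First I would write $n=n_1\cdots n_k$ with the $n_i$ pairwise coprime and invoke CRT: the ring isomorphism $\Z/n\Z\cong\prod_{i=1}^k\Z/n_i\Z$ sends $a$ to $(a_1,\ldots,a_k)$ with $a_i\equiv a\pmod{n_i}$, and, crucially, it is compatible with the polynomial $P$ (which has integer coefficients), so that $P(m)\equiv a\pmod n$ holds if and only if $P(m)\equiv a_i\pmod{n_i}$ for every $i=1,\ldots,k$. Applying this with $m$ running over $\Z/n\Z$ and again using CRT in the source (an element of $\Z/n\Z$ corresponds to a tuple $(m_1,\ldots,m_k)\in\prod\Z/n_i\Z$, and $P(m)\bmod n_i$ depends only on $m\bmod n_i$), the solvability statement follows at once: $a\in R^P_n$ iff the system is solvable iff each congruence $P(m)\equiv a_i\pmod{n_i}$ is solvable iff $a_i\in R^P_{n_i}$ for all $i$. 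Here I would note that it suffices to let $m$ range over $\Z/n\Z$ rather than all of $\N$, since $P$ has integer coefficients and hence $P(m)\bmod n$ depends only on $m\bmod n$.

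Next, for the counting identity \eqref{eq:rhoP}, I would count solutions $m\in\Z/n\Z$ of $P(m)\equiv a\pmod n$ by transporting the count through the same CRT bijection $m\leftrightarrow(m_1,\ldots,m_k)$: the single congruence mod $n$ is equivalent to the independent system of congruences $P(m_i)\equiv a_i\pmod{n_i}$, so the solution set is the product of the solution sets mod each $n_i$, giving
\[
\rho^P(n,a)=\#\{m\in\Z/n\Z: P(m)\equiv a\,(n)\}=\prod_{i=1}^k\#\{m_i\in\Z/n_i\Z:P(m_i)\equiv a_i\,(n_i)\}=\prod_{i=1}^k\rho^P(n_i,a_i),
\]
where one must first check that $\rho^P(n,a)$ as defined via $1\leq m\leq N=n$ agrees with the count over a full residue system $\Z/n\Z$ — this is immediate because $m$ running over $\{1,\ldots,n\}$ is a complete set of residues mod $n$, and likewise $\rho^P(n_i,a_i)$ counts over a complete residue system mod $n_i$.

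Finally, multiplicativity of $\psi^P$ is just the case-counting consequence of the solvability equivalence: for $(n_1,n_2)=1$ the map $a\mapsto(a_1,a_2)$ restricts to a bijection $R^P_{n_1n_2}\to R^P_{n_1}\times R^P_{n_2}$, hence $\psi^P(n_1n_2)=\#R^P_{n_1n_2}=\#R^P_{n_1}\cdot\#R^P_{n_2}=\psi^P(n_1)\psi^P(n_2)$; the general $k$-fold statement follows by induction. I expect no serious obstacle here — the only thing requiring a moment's care is the routine but essential observation that evaluating a polynomial with integer coefficients is compatible with reduction modulo $n$ and with the CRT isomorphism (so that ``$P$ of a tuple'' equals ``the tuple of $P$'s''), which legitimizes passing the congruence through the product decomposition; everything else is bookkeeping.
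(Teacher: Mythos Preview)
Your proposal is correct and follows essentially the same route as the paper: both reduce everything to the Chinese Remainder Theorem, first observing that $P(m)\bmod n$ depends only on $m\bmod n$ so that one may work in $\Z/n\Z$, then splitting the congruence $P(m)\equiv a\pmod n$ into the independent system $P(m_i)\equiv a_i\pmod{n_i}$. The paper phrases this with explicit residues and a direct back-and-forth argument, while you package it as compatibility of $P$ with the ring isomorphism $\Z/n\Z\cong\prod_i\Z/n_i\Z$; you are in fact a bit more explicit than the paper about the counting step for \eqref{eq:rhoP} (the paper simply says ``the argument above also shows'' it) and about why the bijection $R^P_{n_1n_2}\to R^P_{n_1}\times R^P_{n_2}$ yields multiplicativity of $\psi^P$.
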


\begin{proof}
Note that the multiplicativity of $\psi^P$ follows from the second part of the lemma.

Moreover, note that $a\in R^P_n$ iff  $a=P(m)\!\mod n$ for some $0\leq m<n$. Indeed, if $a=P(m)\!\mod n$ for some $m\in\N$, then  $a=P(m')\!\mod n$, where $0\leq m'<n$
is the remainder of $m$ when divided by $n$.

If $a\in R^P_{n_1\cdots n_k}$, i.e.\ $a=P(m)\!\mod n_1\cdots n_k$ for some $0\leq m<n$, then $a_i=a=P(m)=P(m_i)\!\mod n_i$ for every $i=1,\ldots,k$,  where $0\leq m_i<n_i$
is the remainder of $m$ when divided by $n_i$.

Now, suppose $a\in \Z/n\Z$, $a_i=a$ mod~$n_i$ and $a_i\in R^P_{n_i}$ for $i=1,\ldots,k$. Then, for every  $i=1,\ldots,k$, there exists $0\leq m_i<n_i$ such that $a_i=P(m_i)\!\mod n_i$.
By the Chinese Remainder Theorem, there exists a unique $0\leq m<n$ such that $m=m_i\!\mod n_i$ for $i=1,\ldots,k$. It follows that
\[P(m)=P(m_i)=a_i=a\!\mod n_i\text{ for all }i=1,\ldots,k.\]
This yields $a=P(m)\!\mod n_1\cdots n_k$ and $a\in R^P_n$.

The argument above also shows \eqref{eq:rhoP}.
\end{proof}

\begin{Remark}\label{r:psi} Note that in the argument above we used the fact that the $a_i$'s determine $a$ as by the ChRT there exists only one $0\leq a<n$ such that $a=a_i$ mod~$n_i$ for each $i=1,\ldots,k$.\end{Remark}

For any natural $n$ denote by $\omega(n)$ the number of its prime divisors (counted without multiplicities) and by $p(n)$ the product of its prime divisors.
\begin{Cor}\label{cor:rhoP}
The arithmetic function $\rho_P$ is multiplicative and $\rho^P(n)\leq \frac{d^{\omega(n)}}{p(n)}n$.
\end{Cor}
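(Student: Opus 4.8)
The plan is to deduce Corollary~\ref{cor:rhoP} from Lemma~\ref{lem:psi} together with a local (prime-power) estimate. First I would record that multiplicativity of $\rho^P$ is an immediate consequence of \eqref{eq:rhoP}: writing $n=p_1^{e_1}\cdots p_k^{e_k}$ and applying Lemma~\ref{lem:psi} with $n_i=p_i^{e_i}$ gives $\rho^P(n,a)=\prod_i\rho^P(p_i^{e_i},a_i)$, and taking the maximum over $a$ (which by the ChRT, see Remark~\ref{r:psi}, corresponds to taking the maxima over the $a_i$ independently) yields $\rho^P(n)=\prod_{i=1}^k\rho^P(p_i^{e_i})$. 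So the whole statement reduces to proving the bound for prime powers, namely
\[
\rho^P(p^e)\leq \frac{d}{p}\,p^e=d\,p^{e-1}.
\]
Indeed, granting this, $\rho^P(n)=\prod_i\rho^P(p_i^{e_i})\leq \prod_i d\,p_i^{e_i-1}=d^{\omega(n)}\prod_i p_i^{e_i-1}=d^{\omega(n)}\,n/p(n)$, which is exactly the asserted inequality.

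For the prime-power bound the key point is that $\rho^P(p^e,a)=\#\{0\le m<p^e: P(m)\equiv a\ (\mathrm{mod}\ p^e)\}$ counts the roots modulo $p^e$ of $P(X)-a$, a polynomial of degree $d$. The plan is to bound the number of solutions in $\Z/p^e\Z$ of a degree-$d$ congruence. I would group the residues $0\le m<p^e$ according to their residue $\bar m$ mod $p$: for the count to be nonzero on a given class $\bar m$ we need $P(\bar m)\equiv a\ (\mathrm{mod}\ p)$, so $\bar m$ ranges over at most $d$ residues mod $p$ (the roots of $P(X)-a$ in $\Z/p\Z$, and since $P$ is nonconstant mod $p$ there are at most $d$ of them — here one uses that $P$ is monic, or more generally that its reduction mod $p$ is a nonzero polynomial of degree $d$, which is where the monicity hypothesis enters). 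Within each such class there are exactly $p^{e-1}$ values of $m<p^e$, giving the crude bound $\rho^P(p^e,a)\le d\cdot p^{e-1}$ and hence $\rho^P(p^e)\le d\,p^{e-1}$. (A sharper Hensel-type argument is not needed for this inequality.)

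The main obstacle — and the only place a little care is required — is the claim that $P(X)-a$ has at most $d$ roots in $\Z/p\Z$ for \emph{every} $a$. This is the familiar fact that a polynomial of degree $d$ over the field $\mathbb{F}_p$ has at most $d$ roots, applied to $P-a$, whose degree is $d$ precisely because the leading coefficient of $P$ is $1$ (so it does not vanish mod $p$). This is exactly the point at which the monic hypothesis is used, and it explains the parenthetical footnote remark that the statement remains valid when merely the nonzero coefficients of $P-P(0)$ are globally coprime: in that case, for each prime $p$, at least one coefficient of $P-P(0)$ is a unit mod $p$, so the reduction of $P$ mod $p$ is still nonconstant — of some degree $1\le d'\le d$ — and hence has at most $d'\le d$ roots, which suffices for the same bound. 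Once this lemma about roots mod $p$ is in hand, everything else is the elementary bookkeeping sketched above, and assembling it via Lemma~\ref{lem:psi} finishes the proof.
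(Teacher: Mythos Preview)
Your proof is correct and follows the same overall architecture as the paper: deduce multiplicativity of $\rho^P$ from \eqref{eq:rhoP}, establish the prime-power bound $\rho^P(p^e)\le d\,p^{e-1}$, and combine. The difference is in how the prime-power bound is obtained. The paper simply invokes Albis' theorem (Corollary~3 of Theorem~1.23 in \cite{Nar}) for this estimate, whereas you give a direct elementary argument: any solution of $P(m)\equiv a\pmod{p^e}$ must reduce to one of the at most $d$ roots of $P-a$ over $\mathbb{F}_p$ (using monicity so that the reduction has degree $d$), and each such root has exactly $p^{e-1}$ lifts to $\Z/p^e\Z$. Your route is self-contained and avoids the external reference; it also makes transparent exactly where monicity (or the weaker coprimality hypothesis in the footnote) enters, which the citation leaves implicit. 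Both yield the same inequality with no loss.
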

\begin{proof}
The multiplicativity of $\rho^P$ follows directly from \eqref{eq:rhoP}. By Albis theorem (see Corollary 3 of Theorem~1.23 in \cite{Nar}~\footnote{Note that compared to notation from \cite{Nar}, we have:
$$
\rho^P(n;n,a)=\lambda_{P-a}(n),\;\rho^P(n)=\max_{a\in R^P_n}\lambda_{P-a}(n);$$
the estimate on $\lambda_{P}$ in \cite{Nar} depends {\bf only} on the degree of the polynomial.}),
for any prime number we have $\rho^P(p^n)\leq d p^{n-1}$. This result combined with the multiplicativity of $\rho^P$ gives the required bound of
$\rho^P(n)$.
\end{proof}

\begin{Lemma}\label{lem:Nna}
For all $n\in\N$, $a\in R^P_n$ and $N\geq P(n)$, we have
\begin{align*}\rho^P(n,a)\Big(\frac{P^{-1}(N)}{n}-1\Big)&
\leq\#\{m\in\N:1\leq P(m)\leq N,P(m)=a\!\mod n\}\\
&\leq \rho^P(n,a)\Big(\frac{P^{-1}(N)}{n}+1\Big).
\end{align*}
\end{Lemma}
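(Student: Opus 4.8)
The plan is to count the integers $m$ with $1\le P(m)\le N$ and $P(m)\equiv a\bmod n$ by splitting $m$ into complete residue classes modulo $n$. Since $P$ is strictly increasing on $\N$ and $P^{-1}$ denotes its (increasing) inverse, the condition $1\le P(m)\le N$ together with $N\ge P(n)$ is, up to the endpoints, the condition $1\le m\le P^{-1}(N)$; in particular $P^{-1}(N)\ge n$ so at least one full block of length $n$ fits in.

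First I would write $L:=P^{-1}(N)$ and observe that an integer $m$ with $P(m)\equiv a\bmod n$ depends on $m$ only through $m\bmod n$: if $m\equiv m'\bmod n$ then $P(m)\equiv P(m')\bmod n$ because $P$ has integer coefficients. By definition of $\rho^P(n,a)=\rho^P(n;n,a)$, there are exactly $\rho^P(n,a)$ residue classes $r\in\{0,\dots,n-1\}$ with $P(r)\equiv a\bmod n$ (here $a\in R^P_n$ guarantees this count is at least $1$, consistent with the definition). For each such residue $r$, the number of $m$ in $\{1,\dots,\lfloor L\rfloor\}$ with $m\equiv r\bmod n$ lies between $\lfloor L\rfloor/n-1$ and $\lfloor L\rfloor/n+1$, hence between $L/n-1$ and $L/n+1$. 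Summing over the $\rho^P(n,a)$ admissible residues gives
\[
\rho^P(n,a)\Big(\frac{L}{n}-1\Big)\le \#\{1\le m\le \lfloor L\rfloor:P(m)\equiv a\bmod n\}\le \rho^P(n,a)\Big(\frac{L}{n}+1\Big).
\]

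Next I would reconcile the counting set $\{1\le m\le\lfloor L\rfloor\}$ with the set appearing in the statement, namely $\{m\in\N:1\le P(m)\le N,\ P(m)\equiv a\bmod n\}$. Since $P$ is strictly increasing with $P(1)\ge 1$ (as $P$ is monic of degree $d>1$ with non-negative integer coefficients, so $P(1)\ge 1$), the inequality $1\le P(m)\le N$ for $m\in\N$ is equivalent to $1\le m\le L$, i.e.\ to $1\le m\le\lfloor L\rfloor$. Thus the two counting sets coincide, and the displayed double inequality is exactly the claim. The only slightly delicate point is the treatment of the endpoint $m=\lfloor L\rfloor$ when $P(\lfloor L\rfloor)$ could equal $N$ or be just below it, but this ambiguity changes each per-residue count by at most $1$, which is already absorbed in the $\pm 1$ slack; so I do not expect any real obstacle here — the argument is essentially bookkeeping once one notes the integer-coefficient periodicity of $P\bmod n$ and the definition of $\rho^P(n,a)$.
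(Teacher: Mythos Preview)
Your approach is exactly the paper's: split by residues modulo $n$, use that $\rho^P(n,a)$ counts the admissible residues, and bound the number of representatives of each residue in $\{1,\dots,\lfloor P^{-1}(N)\rfloor\}$ by $P^{-1}(N)/n\pm 1$. One small slip: from ``count $\ge \lfloor L\rfloor/n-1$'' you cannot deduce ``count $\ge L/n-1$'' since $\lfloor L\rfloor\le L$; the lower bound still holds, but you need to argue it directly (e.g.\ for a residue $1\le r\le n$ the count is $\lfloor(\lfloor L\rfloor-r)/n\rfloor+1>(L-n)/n=L/n-1$), which is precisely what the paper does by writing the exact count $\sum_i\big(\big\lfloor\frac{P^{-1}(N)-m_i}{n}\big\rfloor+1\big)$ and bounding each term.
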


\begin{proof}
Let $s:=\rho^P(n,a)$ and let $1\leq m_1<\ldots<m_s\leq n$ be all numbers such that $P(m_i)=a\!\mod n$.
Note that a natural number $m$ satisfies $P(m)\leq N$ and $P(m)=a\!\mod n$ iff $m=jn+r$ with $0\leq j\leq (P^{-1}(N)-r)/n$ and $0 <r\leq n$ satisfies  $P(r)=a\!\mod n$.
Thus, $r=m_i$ for some $i=1,\ldots,s$. It follows that
\begin{align*}
\rho:&=\#\{m\in\N:1\leq P(m)\leq N, P(m)=a\!\mod n\}\\
&=\sum_{i=1}^s\Big(\Big[\frac{P^{-1}(N)-m_i}{n}\Big]+1\Big).
\end{align*}
Since
\begin{align*}\frac{P^{-1}(N)}{n}-1&\leq\frac{P^{-1}(N)-m_i}{n}<\Big[\frac{P^{-1}(N)-m_i}{n}\Big]+1\\
&\leq \frac{P^{-1}(N)-m_i}{n}+1<\frac{P^{-1}(N)}{n}+1,
\end{align*}
by summing up, this gives
\[s\Big(\frac{P^{-1}(N)}{n}-1\Big)\leq\rho\leq s\Big(\frac{P^{-1}(N)}{n}+1\Big).\]
\end{proof}

\begin{Remark}\label{r:uzycie}  As $P$ is an increasing function, we can apply the above inequalities to $P(N)$ instead of $N$  (as $P(N)\geq N$). Then $P(m)\leq P(N)$ iff $m\leq N$, and the result of the lemma implies
$$
\rho^P(n,a)\Big(\frac Nn-1\Big)\leq\rho^P(N;n,a)\leq \rho^P(n,a)\Big(\frac Nn+1\Big).$$
\end{Remark}

We now focus on the simplest case when $P(n)=n^2$. We continue to write $R$ for $R^P$, $\psi$ for $\psi^P$ and $\rho$ for $\rho^P$.
In view of Theorems~1.27~and~1.30 in \cite{Nar}, we have the following result.

\begin{Prop}\label{prop:rhop}
For every prime number $p>2$,
for every $a\in R_{p^{N}}$, where $N=2n$ or $2n+1$, we have \begin{equation*}\label{eq:p2p}
\rho(p^{N},a)=\left\{\begin{array}{cl}
2 & \text{ if } a=a'\!\mod p\text{ for }a'\in R_{p}\setminus\{0\}\\
2p^r & \text{ if } a=p^{2r}a'\text{ and }a'=a''\!\mod p\text{ for }a''\in R_{p}\setminus\{0\}\\
p^n & \text{ if } a= 0.
\end{array}\right.
\end{equation*}
Moreover,  we have
\[\psi(p^{2n+1})=\frac{p^{2n+2}+2p+1}{2(p+1)}\text{ and }\psi(p^{2n})=\frac{p^{2n+1}+p+2}{2(p+1)}.~\footnote{We obtain these formulas by using the formulas for the values of $\sigma$ and counting the number of the possibilites in each row, so for $N=2n$, we have:
$$
\psi(p^{2n})=\frac{p-1}2p^{2n-1}+\sum_{r=1}^{n-1}\frac{p-1}2p^{2n-2r-1}+1
=$$$$
1+p\frac{p-1}2\sum_{r=0}^{n-1}p^{2(n-r-1)}=1+\frac{p(p-1)}2
\frac{(p^2)^n-1}{p^2-1}=\frac{p^{2n+1}+p+2}{2(p+1)}.$$
}
\]
Furthermore,
If $p=2$ then
\[
\rho(2,a)=1\text{ for all }a\in R_2,\;\;
\rho(4,a)=2\text{ for all }a\in R_4
\]
and for any $N\geq 3$, where $N=2n$ or $2n+1$, for every $a\in R_{2^{N}}$, we have
\begin{equation*}\label{eq:p2p2}
\rho(2^{N},a)=\left\{\begin{array}{cl}
4 & \text{ if } a=1\!\mod 8\\
4\cdot2^r & \text{ if } a=2^{2r}a',\,2r\leq N-3,\,a'=1\!\mod 8\\
2\cdot 2^r & \text{ if } a=2^{2r}a',\,2r= N-2,\,a'=1\!\mod 4\\
2^r & \text{ if } a=2^{2r}a',\,2r= N-1,\,a'=1\!\mod 2\\
2^n & \text{ if } a=0.
\end{array}\right.
\end{equation*}
Moreover, \[\psi(2^{2n})=\frac{2^{2n-1}+4}{3}\text{ and }\psi(2^{2n+1})=\frac{2^{2n}+5}{3}.\]
\end{Prop}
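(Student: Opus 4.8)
The plan is to prove Proposition~\ref{prop:rhop} by reducing everything to explicit counting of solutions of the congruence $m^2\equiv a\pmod{p^N}$, which is a classical problem in elementary number theory (the structure of quadratic residues modulo prime powers). Rather than invoking Theorems~1.27 and~1.30 of \cite{Nar} as a black box, I would first recall the relevant facts about the multiplicative group $(\Z/p^N\Z)^\times$ and then organise the count according to the $p$-adic valuation of $a$.

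\medskip
\noindent\textbf{Step 1: the odd prime case, $v_p(a)=0$.} For $p>2$, the group $(\Z/p^N\Z)^\times$ is cyclic of order $\varphi(p^N)=p^{N-1}(p-1)$, so the squaring map on it is $2$-to-$1$ onto the subgroup of squares. Hence if $a$ is a unit, the congruence $m^2\equiv a\pmod{p^N}$ has either $0$ or exactly $2$ solutions among units; it has a solution iff $a$ is a quadratic residue, equivalently iff $a\bmod p\in R_p\setminus\{0\}$ (Hensel's lemma lifts a solution mod $p$ uniquely to mod $p^N$, and there are no further solutions since a solution $m$ is automatically a unit). This is precisely the first line of the displayed formula for $\rho(p^N,a)$, and it also identifies $R_{p^N}\cap(\Z/p^N\Z)^\times$ with the residues lying over $R_p\setminus\{0\}$.

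\medskip
\noindent\textbf{Step 2: the odd prime case, $v_p(a)\geq 1$.} If $m^2\equiv a\pmod{p^N}$ with $a\neq 0$, write $v_p(a)=e$. A solution forces $2v_p(m)\le N$ and, when $2v_p(m)<N$, forces $e=2v_p(m)$ to be even, say $e=2r$ with $2r\le N-1$; writing $m=p^r m'$, $a=p^{2r}a'$ with $m',a'$ units, the congruence becomes $m'^2\equiv a'\pmod{p^{N-2r}}$, which by Step~1 has $2$ solutions $m'$ mod $p^{N-2r}$ when $a'$ is a QR mod $p$ and none otherwise. Lifting $m'$ mod $p^{N-2r}$ back to $m=p^r m'$ mod $p^N$ multiplies the solution count by $p^r$ (the choices of the top $r$ digits of $m$). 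This yields the $2p^r$ line. The remaining case is $a=0$: then $m^2\equiv 0\pmod{p^N}$ iff $v_p(m)\ge\lceil N/2\rceil=n$ (recall $N=2n$ or $2n+1$), and the number of such $m$ in a complete residue system mod $p^N$ is $p^{N-n}=p^n$. This gives the last line, and simultaneously the description of $R_{p^N}$: $0\in R_{p^N}$ always, and a nonzero $a$ with $v_p(a)=2r\le N-1$ lies in $R_{p^N}$ iff $a/p^{2r}\bmod p\in R_p\setminus\{0\}$.

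\medskip
\noindent\textbf{Step 3: the counting of $\psi(p^N)$.} Now $\psi(p^N)=\#R_{p^N}$ is obtained by summing the number of admissible $a$'s contributed by each row. The units contribute $\frac{p-1}{2}p^{N-1}$ (half of $\varphi(p^N)$), and for each even valuation $2r$ with $1\le r$ and $2r\le N-1$ the $a$'s of the form $p^{2r}a'$ with $a'$ a QR unit mod $p^{N-2r}$ number $\frac{p-1}{2}p^{N-2r-1}$; finally $a=0$ contributes $1$. Summing the geometric series $1+p\cdot\frac{p-1}{2}\sum_{r=0}^{n-1}p^{2(n-r-1)}$ (for $N=2n$; analogously for $N=2n+1$) gives the closed forms $\psi(p^{2n})=\frac{p^{2n+1}+p+2}{2(p+1)}$ and $\psi(p^{2n+1})=\frac{p^{2n+2}+2p+1}{2(p+1)}$, exactly as in the footnote. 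The multiplicative formula \eqref{eq:rhoP} from Lemma~\ref{lem:psi} then propagates everything to general $n$.

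\medskip
\noindent\textbf{Step 4: the prime $2$.} The only genuine extra work is $p=2$, because $(\Z/2^N\Z)^\times$ is not cyclic for $N\ge 3$: it is $\Z/2\times\Z/2^{N-2}$. Consequently the squaring map on units is $4$-to-$1$ onto its image, and the image is exactly $\{a\equiv 1\pmod 8\}$; this gives $\rho(2^N,a)=4$ for $a\equiv 1\pmod 8$ and $N\ge 3$. For $N=1,2$ one checks directly that squaring on units is $1$-to-$1$ resp.\ $2$-to-$1$, giving $\rho(2,a)=1$ and $\rho(4,a)=2$. The valuation-$2r$ cases are handled as in Step~2 by the substitution $a=2^{2r}a'$, $m=2^r m'$, reducing to modulus $2^{N-2r}$; depending on whether $N-2r\ge 3$, $=2$, or $=1$ one falls into the $4\cdot 2^r$, $2\cdot 2^r$, or $2^r$ line respectively, with the congruence condition on $a'$ being $\equiv 1\pmod 8$, $\pmod 4$, $\pmod 2$ accordingly. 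The case $a=0$ gives $2^n$ as before. Summing the admissible residues row by row yields $\psi(2^{2n})=\frac{2^{2n-1}+4}{3}$ and $\psi(2^{2n+1})=\frac{2^{2n}+5}{3}$.

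\medskip
\noindent\textbf{Main obstacle.} The conceptual content is entirely classical; the real care is needed in the bookkeeping at $p=2$ — keeping track of how many of the ``top digits'' of $m$ are free when one lifts $m'$ mod $2^{N-2r}$ to $m=2^r m'$ mod $2^N$, and correctly reading off whether the residual modulus $2^{N-2r}$ has fallen to $1$, $2$, or $\ge 4$. I would present Steps~1--3 in reasonable detail and then note that Step~4 follows the same pattern with the cyclic-group input replaced by the structure theorem for $(\Z/2^N\Z)^\times$, relegating the explicit case split to the statement itself and referring to \cite{Nar} for the fully spelled-out computation.
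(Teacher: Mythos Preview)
Your argument is correct. The paper itself does not give a proof of Proposition~\ref{prop:rhop}: it simply invokes Theorems~1.27 and~1.30 of Narkiewicz \cite{Nar} as a black box, and the only computation shown is the footnote summing the geometric series for $\psi(p^{2n})$, which is exactly your Step~3. Your proposal therefore does strictly more than the paper, replacing the citation by a self-contained elementary argument based on the structure of $(\Z/p^N\Z)^\times$ and a valuation decomposition $a=p^{2r}a'$, $m=p^r m'$. The lift count (each solution $m'$ modulo $p^{N-2r}$ producing $p^r$ solutions $m$ modulo $p^N$) and the case analysis at $p=2$ according to whether the residual modulus $2^{N-2r}$ is $\geq 8$, $=4$, or $=2$ are handled correctly, and your row-by-row summation recovers the stated closed forms for $\psi$. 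The trade-off is clear: the paper's route is shorter but opaque, while yours is longer but makes the proposition independent of \cite{Nar}; since the downstream uses (Corollary~\ref{cor:n^2}, \eqref{eq:omegarho}, \eqref{eq:tpsi}) only need the explicit values and the crude bounds they imply, either presentation suffices.
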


%\begin{proof}
%As $p^2=0\!\mod p$, we have $0\in R_p$ and $\rho(p,0)\geq 1$.
%Let us consider the multiplicative group $(\Z/p\Z)^*=\Z/p\Z\setminus\{0\}$.
%Since the group is cyclic with a generator $g$, we have
%\[R_p\setminus\{0\}=\{g^2,g^4,\ldots, g^{p-1}\}.\]
%Hence $\psi(p)=1+\tfrac{p-1}{2}=\tfrac{p+1}{2}$.
%
%Suppose that $a\in R_p\setminus\{0\}$. Then $a=m^2\!\mod p$ with $1\leq m< p$.
%Indeed, as $a\in R_p$, we have $a=M^2\!\mod p$ for some $ M\in\N$. Let $M=jp+m$ with $0<m\leq p$.
%Then $a=M^2=m^2\!\mod p$. As $a\neq 0$, we have $m\neq p$.
%
%Note if $a=m^2\!\mod p$ with $1\leq m\leq p-1$ then $a=(p-m)^2\!\mod p$ with $1\leq p-m\leq p-1$
% and $m\neq p-m$. It follows that $\rho(p^2:p,a)\geq 2$ if $a\in R_p\setminus\{0\}$.
%Therefore
%\begin{align*}
%p&=\#\{1^2,2^2,\ldots,p^2\}\geq \sum_{a\in R_p}\#\{1\leq m\leq p:m^2=a\!\mod p\}\\
%&=\sum_{a\in R_p}\rho(p,a)\geq 1+\sum_{a\in R_p\setminus\{0\}}2=1+2(\psi(p)-1)=p.
%\end{align*}
%This gives \eqref{eq:p2p}.
%\end{proof}

\begin{Cor}\label{cor:n^2}
For every natural $n\geq 2$, we have $\rho(n)\leq 4\sqrt{n}$. Moreover, if $n$ is square-free, then $\rho(n)\leq 2^{\omega(n)}$.
\end{Cor}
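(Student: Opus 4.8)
The plan is to reduce everything to prime powers via the multiplicativity of $\rho=\rho^P$ established in Corollary~\ref{cor:rhoP}, and then simply read the needed bounds off the explicit tables in Proposition~\ref{prop:rhop}. Writing $n=p_1^{k_1}\cdots p_s^{k_s}$, multiplicativity gives $\rho(n)=\prod_{i=1}^s\rho(p_i^{k_i})$, so it suffices to bound each factor.

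First I would treat an odd prime $p$ and $k\ge 1$, writing $k=2n$ or $k=2n+1$. By Proposition~\ref{prop:rhop} every value $\rho(p^k,a)$ with $a\in R_{p^k}$ lies in $\{2\}\cup\{2p^r:2r\le k-1\}\cup\{p^n\}$, where the constraint $2r\le k-1$ reflects that $a=p^{2r}a'$ with $a'$ coprime to $p$ and $a<p^k$. Maximising over the admissible $r$ gives $\rho(p^{2n})\le\max\{2,2p^{n-1},p^n\}=p^n=\sqrt{p^{2n}}$ and $\rho(p^{2n+1})\le\max\{2,2p^n,p^n\}=2p^n$. Since $\sqrt{p^{2n+1}}=p^n\sqrt p$, this yields $\rho(p^k)\le\sqrt{p^k}$ whenever $p\ge 5$ (because then $2\le\sqrt p$), and $\rho(p^k)\le 2\sqrt{p^k}$ for every odd $p$. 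For $p=2$ one argues identically with the second table: the admissible values are $4\cdot 2^r$ ($2r\le k-3$), $2\cdot 2^r$ ($2r=k-2$), $2^r$ ($2r=k-1$) and $2^n$ ($a=0$), together with the special cases $\rho(2)=1$, $\rho(4)=2$; maximising over the $r$ compatible with the parity of $k$ gives $\rho(2^{2n})\le 2^n=\sqrt{2^{2n}}$ and $\rho(2^{2n+1})\le 2^{n+1}=\sqrt2\cdot\sqrt{2^{2n+1}}$, so in all cases $\rho(2^k)\le 2\sqrt{2^k}$.

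Finally I would assemble the pieces: $\rho(n)=\prod_i\rho(p_i^{k_i})$ picks up the extra factor $2$ at most for the (at most two) primes $2$ and $3$, all other prime-power factors satisfying $\rho(p_i^{k_i})\le\sqrt{p_i^{k_i}}$, hence $\rho(n)\le 4\prod_i\sqrt{p_i^{k_i}}=4\sqrt n$. For square-free $n$ all $k_i=1$, and from the first step $\rho(p)=2p^0=2$ for every odd prime $p$ while $\rho(2)=1$, so $\rho(n)=\prod_i\rho(p_i)\le 2^{\omega(n)}$. The only point requiring care is the bookkeeping of which exponents $r$ are compatible with the parity of $k$ in Proposition~\ref{prop:rhop}, together with checking the small cases $k=1,2$ at $p=2$ by hand; there is no deeper obstacle.
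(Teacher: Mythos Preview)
Your proof is correct and follows essentially the same route as the paper's: both reduce to prime powers via the multiplicativity from Corollary~\ref{cor:rhoP}, read off from Proposition~\ref{prop:rhop} that $\rho(p^k)\le\sqrt{p^k}$ for $p\ge 5$ while $\rho(p^k)\le 2\sqrt{p^k}$ for $p\in\{2,3\}$, and then multiply. Your write-up is in fact more detailed than the paper's, which simply asserts the prime-power bounds ``by a direct inspection'' and sketches only the middle case $\rho(p^N,a)=2p^r$.
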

\begin{proof} By a direct inspection of the formulas in Proposition~\ref{prop:rhop}, we obtain:
$$
\rho(2^N)\leq 2\sqrt{2^N},\;\rho(3^N)\leq 2 \sqrt{3^N}$$
but for all $p\geq5$, we have
$$
\rho(p^N)\leq \sqrt{p^N}.$$
Indeed, for the cases $a=a'$ mod~$p$ (for $a'\in R_p\setminus\{0\}$) and $a=0$, it is direct. For the case $\rho(p^N,a)=2p^r$, we have $a=p^{2r}a'<p^N$, so $2r\leq N-1$ and then indeed $2p^r\leq p^{N/2}$.

The second inequality follows directly from $\rho(p)\leq 2$.
\end{proof}

%\begin{Cor}\label{lem:rhon2n}
%Suppose that $n=p_1p_2\cdots p_k$ and $p_1,\ldots,p_k$ are %different prime numbers.
%Then
%\begin{equation}\label{eq:psi}
%\psi(n)\leq 2\prod_{i=1}^k\frac{p_i+1}{2}.
%\end{equation}
%Moreover, for every $a\in R_n$, we have
%\begin{equation}\label{eq:rhon2n}
%\rho(n,a)=2^{k-s(a,n)},\text{ where }s(a,n)=\#\{1\leq i\leq %k:p_i|a\text{ and }p_i\text{ is odd}\}.
%\end{equation}
%\end{Cor}

%\begin{proof}
%Both  \eqref{eq:psi} and \eqref{eq:rhon2n} follow directly from Lemmas~\ref{lem:psi}~and~\ref{prop:rhop}.
%The formula  follows directly from Lemmas~\ref{lem:psi}~and\ref{prop:rhop}.
%To prove \eqref{eq:rhon2n} it suffices to show that
%\begin{equation}\label{eq:rhon2n1}
%\rho(n,a)=\prod_{i=1}^k\rho(p_i,a_i),
%\end{equation}
%where $a_i$ is the remainder the division of $a$ by $p_i$. Indeed, by Lemma~\ref{lem:rhop},
%$\rho(p_i,a_i)$ is equal to $1$ or $2$ and
%\[
%\rho(p_i,a_i)=1\Leftrightarrow a_i=0\Leftrightarrow a=0\!\mod p_i\Leftrightarrow p_i|a.
%\]
%
%Let us consider any $a\in R_n$. An argument used to prove Lemma~\ref{lem:psi} shows that for every $1\leq m\leq n$
%we have
%\[m^2=a\!\mod n\Leftrightarrow \forall_{1\leq i\leq k}\exists_{1\leq m_i\leq p_i}\;m_i=a\!\mod p_i.\]
%Moreover, by the Chinese remainder theorem, the map
%\[m\mapsto(m_1,m_2,\ldots, m_k)\]
%is bijective.  This gives \eqref{eq:rhon2n1}, which completes the proof.
%\end{proof}

For some future purposes, we are interested in cases (in Proposition~\ref{prop:rhop}) which gives possibly smallest values for the function $\rho$, hence,
for every prime number $p$ and any natural $n$, let
\[\widetilde{R}_{p^n}:=\left\{
\begin{array}{cl}
\{0\leq a<p^n:a=a'\!\mod p\text{ for }a'\in R_p\setminus\{0\}\}&\text{ if }p>2\\
R_2 &\text{ if }p^n=2\\
R_4 &\text{ if }p^n=4\\
\{0\leq a<2^n:a=1\!\mod 8\}&\text{ if }n\geq 3.
\end{array}
\right.\]
By Proposition~\ref{prop:rhop}, $\widetilde{R}_{p^n}\subset{R}_{p^n}$.

Let $n=p_1^{m_1}p_2^{m_2}\cdots p_k^{m_k}$ be the canonical representation of $n$.
Let
\[\Phi:\Z/n\Z\to \Z/p_1^{m_1}\Z\times\ldots\times \Z/p_k^{m_k}\Z\]
be the canonical ring isomorphism. Recall (cf.\ Lemma~\ref{lem:psi} and Remark~\ref{r:psi}) that $\Phi$ establishes a
one-to-one correspondence between $R_n$ and $R_{p_1^{m_1}}\times\ldots\times R_{p_k^{m_k}}$.
Set
\[\widetilde{R}_{n}:=\Phi^{-1}(\widetilde R_{p_1^{m_1}}\times\ldots\times \widetilde R_{p_k^{m_k}})\]
and
\[\widetilde \psi(n):=\#\widetilde{R}_{n}.\]
Then, clearly, $\widetilde \psi$ is a multiplicative function. Moreover, by  Proposition~\ref{prop:rhop}, for each $a\in \widetilde{R}_{p^N}$, we have
$$
\rho(p^N,a)=\left\{
\begin{array}{cl}
1&\text{ if }p^N=2\\
2 &\text{ if }p^N=2\text{ or }p>2\\
4 &\text{ if }p=2\text{ and }N\geq3.
\end{array}
\right.
$$
Hence, in view of~\eqref{eq:rhoP},
for every $a\in \widetilde R_n$, we have
\begin{equation}\label{eq:omegarho}
\frac12\cdot 2^{\omega(n)}\leq \rho(n,a)\leq 2\cdot 2^{\omega(n)}.
\end{equation}
Moreover, by definition,
\[\widetilde{\psi}(p^n):=\left\{
\begin{array}{cl}
p^{n-1}\frac{p-1}{2}&\text{ if }p>2\\
2 &\text{ if }p^n=2\\
2 &\text{ if }p^n=4\\
2^{n-3}&\text{ if }p=2\text{ and }n\geq 3.
\end{array}
\right.\]
It follows that
\begin{equation}\label{eq:tpsi}
\frac{1}{2}\prod_{p|n}\Big(1-\frac{1}{p}\Big)
\leq \frac{2^{\omega(n)}\widetilde{\psi}(n)}{n}\leq 4\prod_{p|n}\Big(1-\frac{1}{p}\Big).
\end{equation}
(To obtain these inequalities, for $n=p_1^{m_1}p_2^{m_2}\cdots p_k^{m_k}$, write $\frac{2^{\omega(n)}\widetilde{\psi}(n)}{n}=\prod_{i=1}^k
\frac{2\tilde{\psi}(p_i^{m_i})}{p_i^{m_i}}$ and apply the formula above.)
%and
%\begin{equation}
%\frac{1}{2}\prod_{p|n}\Big(1-\frac{1}{p}\Big)
%\leq \frac{\widetilde{\psi}(n)}{{\psi}(n)}.
%\end{equation}

{\subsection{Polynomial ergodic theorem}\label{sec:nonconv}
In the result below $P$ is a monic polynomial of degree $d>1$ with non-negative integer coefficients.}
\begin{Th}\label{thm:posP}
Suppose that $(X_x,S)$ is a Toeplitz system such that
\begin{equation}\label{assum:?}
?_k=o(n_k/{\rho^P(n_k)}).
\end{equation}
Then, for every continuous map $F:X_x\to\C$ and $y\in X_x$,  the limit
\begin{equation}\label{eq:SNT}
\lim_{N\to\infty}\frac{1}{N}\sum_{m\leq N}F(S^{P(m)}y)
\end{equation}
exists.
\end{Th}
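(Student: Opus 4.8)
The plan is to rerun the proof of Theorem~\ref{thm:PNT}, replacing the Dirichlet equidistribution input \eqref{eq:dir} by the polynomial counting estimate of Lemma~\ref{lem:Nna}, in the form of Remark~\ref{r:uzycie}:
\begin{gather*}
\Big|\rho^P(N;n_k,v)-\tfrac{N}{n_k}\,\rho^P(n_k,v)\Big|\le\rho^P(n_k,v)\le\rho^P(n_k)\quad(N\ge n_k,\ v\in\Z/n_k\Z),\\
\sum_{0\le v<n_k}\rho^P(n_k,v)=n_k,
\end{gather*}
where throughout we set $\rho^P(n_k,v):=0$ when $v\notin R^P_{n_k}$. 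As in Theorem~\ref{thm:PNT}, one first reduces to the case when $F$ depends only on the zero coordinate: if $F(y)=f(y(-m),\dots,y(m))$ one passes to the higher-block Toeplitz sequence $x^{(m)}$, which has the same periodic structure $(n_k)$ and, by \eqref{ineq:?}, satisfies $?_k(x^{(m)})\le(2m+1)?_k(x)=o(n_k/\rho^P(n_k))$ (the quantity $\rho^P(n_k)$ depending only on $n_k$ and $P$); since such $F$ are dense in $C(X_x)$ and all estimates below are uniform in the base point, this suffices. It is also enough to prove a Cauchy criterion for the averages $\frac1N\sum_{m\le N}F(S^{P(m)+r}x)$, $r\in\Z$, with a threshold independent of $r$: for $y\in X_x$ pick $r_j$ with $S^{r_j}x\to y$ (the orbit of $x$ is dense in $X_x$); for fixed $N$ the average $\frac1N\sum_{m\le N}F(S^{P(m)+r_j}x)$ converges to $\frac1N\sum_{m\le N}F(S^{P(m)}y)$, so a uniform-in-$r$ Cauchy estimate transfers to $y$.

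So fix $F(y)=f(y(0))$ and $\vep>0$, and use \eqref{assum:?} to choose $k$ with $?_k\,\rho^P(n_k)/n_k<\vep$. For $m\le N$ put $a(m):=(P(m)+r)\bmod n_k$; whenever $x_k(a(m))\neq ?$ we have $F(S^{P(m)+r}x)=F(S^{a(m)}x)$, since $x$ coincides with the $n_k$-periodic pattern $x_k$ off the ``$?$''-positions of stage $k$. Grouping the $m\le N$ by the value $a=a(m)\in\{0,\dots,n_k-1\}$ gives
\begin{gather*}
\frac1N\sum_{m\le N}F(S^{P(m)+r}x)=\sum_{\substack{0\le a<n_k\\ x_k(a)\neq ?}}F(S^{a}x)\,\frac{\rho^P(N;n_k,a-r)}{N}+E_N,\\
|E_N|\le\|F\|_{\sup}\,?_k\Big(\frac{\rho^P(n_k)}{n_k}+\frac{\rho^P(n_k)}{N}\Big).
\end{gather*}
Note that, unlike in the prime case, there is no need to treat separately the residues $a-r$ not coprime to $n_k$: the single bound $\rho^P(N;n_k,v)\le\rho^P(n_k)\big(\tfrac{N}{n_k}+1\big)$ already controls every residue at once, so no ``$p\mid n_k$'' term appears.

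From the above, $|E_N|<2\vep\|F\|_{\sup}$ for $N$ large enough (for the fixed $k$, the term $?_k\rho^P(n_k)/N$ tends to $0$), while the main sum differs from
\[L_k(r):=\sum_{\substack{0\le a<n_k\\ x_k(a)\neq ?}}F(S^{a}x)\,\frac{\rho^P(n_k,a-r)}{n_k}\]
by at most $\|F\|_{\sup}\sum_{0\le a<n_k}\rho^P(n_k,a-r)/N=\|F\|_{\sup}\,n_k/N<\vep\|F\|_{\sup}$ for $N$ large. Hence there is $N_\vep$, independent of $r$, with $\big|\frac1N\sum_{m\le N}F(S^{P(m)+r}x)-L_k(r)\big|<3\vep\|F\|_{\sup}$ for all $N\ge N_\vep$ and all $r$; this is the desired (uniform in $r$) Cauchy property, and existence of \eqref{eq:SNT} for every $y\in X_x$ follows by the reductions above. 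The single place where hypothesis \eqref{assum:?} is used is the bound on $E_N$: the sequence $P(m)\bmod n_k$ may concentrate on one residue class with density up to $\rho^P(n_k)/n_k$, which by Corollary~\ref{cor:rhoP} can be of order $d^{\omega(n_k)}/p(n_k)$, so the ``$?$''-positions must be sparse precisely on the scale $n_k/\rho^P(n_k)$ — the exact analogue of the role of $?_k=o(\varphi(n_k))$ in Theorem~\ref{thm:PNT}. (For $P(m)=m^2$, by Corollary~\ref{cor:n^2} the hypothesis is implied, e.g., by $?_k=o(\sqrt{n_k})$, and by $?_k=o(n_k/2^{\omega(n_k)})$ when $n_k$ is square-free.) This is the only genuinely delicate point; everything else is a routine transcription of the proof of Theorem~\ref{thm:PNT}.
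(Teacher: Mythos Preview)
Your proof is correct and follows essentially the same route as the paper's: reduce to $F$ depending on the zero coordinate via $x^{(m)}$, establish a Cauchy estimate for the averages $\tfrac1N\sum_{m\le N}F(S^{P(m)+r}x)$ uniformly in $r$ by comparing to the finite sum $L_k(r)=\tfrac1{n_k}\sum_{x_k(a)\neq ?}\rho^P(n_k,a-r)F(S^ax)$, using Remark~\ref{r:uzycie} as the counting input. The only cosmetic differences are that you invoke the identity $\sum_v\rho^P(n_k,v)=n_k$ to bound the main-term discrepancy (the paper instead bounds each summand by $\tfrac{\vep}{8}\tfrac{N}{n_k}$ via the choice $N_\vep\ge 8n_k^2/\vep$), and you spell out explicitly the density argument passing from $S^rx$ to an arbitrary $y\in X_x$.
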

\begin{proof}
To show \eqref{eq:SNT}, we need to prove that for every
$\vep>0$ there exists $N_{\vep}$ so that for every $N,M\geq N_{\vep}$ and every $r\in\Z$, we have

\begin{equation}\label{eq:cauch^2}
\Big|\frac1{N}\sum_{m\leq N}F(S^{P(m)+r}x)- \frac1{M}\sum_{m\leq M}F(S^{P(m)+r}x)\Big|<\vep.
\end{equation}
We  first assume that $F:X_x\to\R$ depends only on the zero coordinate, i.e.\ $F(y)=f(y(0))$ for some $f:\mathcal{A}\to\R$.

Fix $\vep>0$. Choose $k\geq 1$ so that
\begin{equation}\label{eq:?}
?_k<\frac{\vep}{8}\frac{n_k}{{\rho^P(n_k)}}.
\end{equation}
Next, choose $N_\vep\geq 8n_k^2/\vep$. Then, in view of Remark~\ref{r:uzycie} (and the choice of $N_\vep$),
for every $N\geq N_\vep$ and $a\in R^P_{n_k}$, we have
\begin{align}
\label{eq:dir^2}
\left|\rho^P(N;n_k,a)-\rho^P(n_k,a)\frac{N}{n_k}\right|<\rho^P(n_k)\leq n_k\leq \frac{\vep}{8}\frac{N}{n_k}.
\end{align}

From now on, we write that an integer number $v$ belongs to $R^P_{n_k}$ if there exists $0\leq v'<n_{k}$ such that $v'=v\!\mod n_k$ and $v'\in R^P_{n_k}$.
We will show that for all $N\geq N_\vep$ and $r\in\Z$, we have
\begin{equation}\label{eq:s-a}
\Big|\frac{1}{N}\sum_{m\leq N}F(S^{P(m)+r}x)-\frac{1}{n_{k}}\sum_{\substack{0\leq a<n_k \\a-r\in R^P_{n_k}\\x_k(a)\neq ?}}{\rho^P(n_k,a-r)}F(S^{a}x)\Big|\leq \vep\|F\|_{\sup},
\end{equation}
and this implies \eqref{eq:cauch^2}.

Recall that $x_{k}\in(\mathcal{A}\cup\{?\})^\Z$ is an $n_{k}$-periodic sequence (used to construct $x$ at stage $k$). Note that for every $a\in\Z$, we have
\[x_{k}(a)\neq\; ? \Rightarrow x(a+j\cdot n_{k})=x_{k}(a)\text{ for every }j\in\Z.\]
This implies that if  $m\leq N$ and $x_{k}(P(m)+r \!\mod n_{k})\neq \;?$, then
\begin{equation}\label{eq:z2^2}
F(S^{P(m)+r}x)=F(S^{P(m)+r \!\mod n_{k}}x).
\end{equation}
Therefore,
\begin{align*}
\#\{&m\leq N: x_{k}(P(m)+r\!\mod n_{k})=\;?\}\\
&=
 \sum_{\substack{0\leq a<n_k\\
a-r\in R^P_{n_k}\\
x_k(a)=?}}\#\{m\leq N: P(m)=a-r\!\mod n_{k}\}
=
\sum_{\substack{0\leq a<n_k\\
a-r\in R^P_{n_k}\\
x_k(a)=?}}\rho^P(N;n_k,a-r).
\end{align*}
Assume that $N\geq N_\vep$. By \eqref{eq:dir^2},  for every integer $v\in R^P_{n_k}$,  we have
\[\rho^P(N;n_k,v)\leq  2{\rho^P(n_k)}\frac{N}{n_k}.\]
In view of \eqref{eq:?}, it follows that
\begin{align*}
\#&\{m\leq N: x_{k}(P(m)+r\!\mod n_{k})=\;?\}\\
&\leq \#\{0\leq a<n_k:a-r\in R^P_{n_k},x_k(a)=?\}  2{\rho^P(n_k)}\frac{N}{n_k}
\\
&\leq 2?_k  {\rho^P(n_k)}\frac{N}{n_k}\leq \frac{\vep}{4}N.
\end{align*}
Let
\[
U_N:=\{m\leq N: x_{k}(P(m)+r\!\mod n_{k})\neq\;?\}.
\]
Then by the above, for every $N\geq N_{\vep}$,
\begin{align}\label{eq:U_N}
\Big|\frac{1}{N}\sum_{m\leq N}F(S^{P(m)+r}x)- \frac{1}{N}\sum_{m\in U_N}F(S^{P(m)+r}x)\Big|\leq \frac{\vep}{4}\|F\|_{\sup}.
\end{align}
But by \eqref{eq:z2^2},
\begin{align*}
\sum_{m\in U_N}F(S^{P(m)+r}x)&=\sum_{\substack{0\leq a<n_{k}\\a-r\in R^P_{n_k}\\
x_k(a)\neq ?}}\sum_{\substack{m\leq N\\P(m)=a-r \!\mod n_{k}}}F(S^{a}x)\\
&=
\sum_{\substack{0\leq a<n_{k}\\a-r\in R^P_{n_k}\\x_k(a)\neq ?}}F(S^{a}x)\#\{m\leq N:\:P(m)=a-r \!\mod n_{k}\}\\
&=
\sum_{\substack{0\leq a<n_{k}\\a-r\in R^P_{n_k}\\x_k(a)\neq ?}}F(S^{a}x)\rho^P(N;n_k,a-r).
\end{align*}
By \eqref{eq:dir^2}, we have
\[\Big|\rho^P(N;n_k,a-r)-\rho^P(n_k,a-r)\frac{N}{n_k}\Big|<\frac{\vep}{8}\frac{N}{n_k}.\]
It follows that
\begin{align*}
&\Big|\frac{1}{N}\sum_{m\in U_N}F(S^{P(m)+r}x)-\frac{1}{n_k}\sum_{\substack{0\leq a<n_{k}\\a-r\in R^P_{n_k}\\x_k(a)\neq ?}}{\rho^P(n_k,a-r)}F(S^{a}x)\Big|\\
&=\Big|\frac{1}{N}\sum_{\substack{0\leq a<n_{k}\\a-r\in R^P_{n_k}\\x_k(a)\neq ?}}F(S^{a}x)\rho^P(N;n_k,a-r)-
\frac{1}{n_k}\sum_{\substack{0\leq a<n_{k}\\a-r\in R^P_{n_k}\\x_k(a)\neq ?}}{\rho^P(n_k,a-r)}F(S^{a}x)\Big|\\
&\leq\frac{1}{N}\sum_{\substack{0\leq a<n_{k}\\a-r\in R^P_{n_k}\\x_k(a)\neq ?}}|F(S^{a}x)|\Big|\rho^P(N;n_k,a-r)-{\rho^P(n_k,a-r)}\frac{N}{n_k}\Big|\\
&\leq\|F\|_{\sup}\frac{\vep}{8}\frac{\#\{0\leq a<n_{k}:x_k(a)\neq ?,a-r\in R^P_{n_k}\}}{n_k}
\leq \|F\|_{\sup}\frac{\vep}{8}.
\end{align*}
Together with \eqref{eq:U_N}, this gives \eqref{eq:s-a}, which completes the proof in the case of $F$ depending only on the zero coordinate.
The rest of the proof runs as in the proof of Theorem~\ref{thm:PNT}, this is by passing to the Toeplitz sequences $x^{(m)}\in(\mathcal{A}^{2m+1})^\Z$ for $m\geq 1$.
\end{proof}

\begin{Remark}\label{r:rpimediv}
Denote by $\PP_{(n_t)}$ the set of all prime divisors of elements of the sequence $(n_t)_{t\geq 1}$.
In view of Corollary~\ref{cor:rhoP}, $?_t=o({p(n_t)}/d^{\omega(n_t)})$ implies \eqref{assum:?}.
Unfortunately, if $\PP_{(n_t)}$ is finite then the sequence $({p(n_t)}/d^{\omega(n_t)})_{t\geq 1}$ is
bounded, so Theorem~\ref{thm:posP}, in the way, is not applicable. Fortunately, if $\PP_{(n_t)}$ is infinite then ${p(n_t)}/d^{\omega(n_t)}\to+\infty$
as $t\to+\infty$, so Theorem~\ref{thm:posP} applies to a non-trivial class of regular Toeplitz shifts, in particular, it applies when the periodic sequences $x_t$ defining $x$ have a bounded number of ``?''.

However, Theorem~\ref{thm:posP} applies to a much wider class of regular Toeplitz shifts when $P(n)=n^2$.
Then, by Corollary~\ref{cor:n^2}, $?_t=o(\sqrt{n_t})$ implies \eqref{assum:?}.
Here the finiteness or the infinity of the set $\PP_{(n_t)}$ does not matter.
\end{Remark}

The assumption \eqref{assum:?}
about the growth of the sequence $(?_t)_{t\geq 1}$ is the least restrictive when all $n_t$ are square-free.
Then, by the second part of Corollary~\ref{cor:n^2}, $?_t=o({n_t}/2^{\omega(n_t)})$ implies \eqref{assum:?}.
Therefore,  $?_t=O({n_t}^{1-\frac{1}{\log_2\log_2 n_t}})$ also implies \eqref{assum:?}. Indeed, it suffices to show that
$2^{\omega(n)}=o({n}^{\frac{1}{\log_2\log_2 n}})$ for square-free numbers $n\to+\infty$. Suppose that $\omega(n)=k$ and
denote by $(p_l)_{l\geq 1}$ the increasing sequence of all prime numbers. Since
\[\ln n\geq \sum_{l=1}^k\ln p_l\geq k\ln k,\]
we have
\[\frac{2^{\omega(n)}}{{n}^{\frac{1}{\log_2\log_2 n}}}=\frac{2^{k}}{{2}^{\frac{\log_2 n}{\log_2\log_2 n}}}\leq \frac{2^k}{2^{\frac{k\log_2 k}{\log_2(k\log_2 k)}}}
=\frac{1}{2^{\frac{k\log_2\log_2 k}{\log_2 k+\log_2\log_2 k}}}.\]
As $\tfrac{k\log_2\log_2 k}{\log_2 k+\log_2\log_2 k}\to+\infty$ when $k\to+\infty$, this gives $2^{\omega(n)}=o({n}^{\frac{1}{\log_2\log_2 n}})$.

%Note that $?_t=o({n_t}/2^{\omega(n_t)})$ holds whenever $\PP_{(n_t)}$ is finite. We hence obtain the following result.
%
%\begin{Cor} If all numbers $n_k$ are square-free then every Toeplitz subshift given by a regular Toeplitz sequence with periodic structure $(n_k)$ satisfies
%\eqref{eq:SNT}  with $P(n)=n^2$.
%\end{Cor}

\subsection{Counter-examples} \label{s:kwadrat}
We will show that there exists a regular Toeplitz sequence $x\in\{0,1\}^\Z$ with the period structure $(n_t)_{t\geq 1}$
satisfying
\begin{equation}\label{eq:perstrsq}
n_{t+1}=k_{t+1}n_k\text{ with }(k_{t+1},n_t)=1, \; n_{t+1}\geq 2^4n_t^2\text{ and }\sum_{p\in\PP_{(n_t)}}\tfrac{1}{p}<+\infty
\end{equation}
and such that
\begin{equation*}\label{eq:fsnt}
\lim_{t\to\infty}\frac1{\sqrt{n_t}}\sum_{0\leq m<\sqrt{n_t}}F(S^{m^2}x)\text{ does not exist,}
\end{equation*}
where $F(y)=(-1)^{y(0)}$.
Let
\[0<\beta:=\frac{1}{16}\prod_{p\in\PP_{(n_t)}}\frac{p-1}{p}.\]
By \eqref{eq:tpsi}, for every $t\geq 1$, we have
\begin{equation}\label{sq2}
\frac{2^{\omega(n_t)}\widetilde\psi(n_t)}{n_t}\geq 8\beta.
\end{equation}
Passing to a subsequence of $(n_t)_{t\geq 1}$ (and remembering that $\widetilde{\psi}(m)\to\infty$ when $m\to\infty$), we can assume that
\[\sum_{t\geq 1}\frac{1}{\widetilde\psi(k_t)}\leq \frac{1}{2}.\]
Set \[\gamma_t:=\sum_{l=1}^t\frac{1}{\widetilde\psi(k_l)} \; \big(\leq \frac{1}{2}\big).\]
At stage $t$, $x$ is approximated  by the infinite concatenation of $x_t[0,n_t-1]\in\{0,1,?\}^{n_t}$ (that is, we see a periodic sequence of $0,1,?$ with period $n_t$). Successive ``?'' will be filled in the next steps of construction of $x$. We require that:
\begin{gather}
\label{sq3}
\{0\leq i< n_t:\: x_t(i)=?\}\subset R_{n_t};\\
\label{sq4}
\#\{a\in \widetilde R_{n_t}:\:x_t(a)=?\}\geq(1-\gamma_t)\widetilde \psi(n_t);\\
\label{sq5}
\#\{0\leq m<\sqrt{n_t}:\:x_t(m^2)=?\}\geq\beta\sqrt{n_t}.
\end{gather}
Recall that, in view of Lemma~\ref{lem:Nna} (remembering that $P^{-1}(n_{t+1})=\sqrt{n_{t+1}}$), \eqref{eq:omegarho} and~\eqref{eq:perstrsq},
for each $a\in \widetilde R_{n_t}$, we have\footnote{$\N^2$ stands for $\{m^2:m\geq 0\}$.}
\begin{align*}
\#(&\{a+jn_t:\:0< j<k_{t+1}\}\cap \N^2)\geq
\#(\{m^2=a~\text{mod }n_t:\: m^2<n_{t+1}\})-1\\
&\geq
\Big(\frac{\sqrt{n_{t+1}}}{n_t}-1\Big)\rho(n_{t},a)-1\geq
\Big(\frac{\sqrt{n_{t+1}}}{n_t}-1\Big)\frac12 2^{\omega(n_t)}-1\\
&\geq
\frac12 2^{\omega(n_t)}\Big(\frac{\sqrt{n_{t+1}}}{n_t}-2\Big)\geq \frac142^{\omega(n_t)}\frac{\sqrt{n_{t+1}}}{n_t},
\end{align*}
so
\begin{equation}\label{sq7}
\#(\{a+jn_t:\:0< j<k_{t+1}\}\cap \N^2)\geq \frac{2^{\omega(n_t)}}{4}\frac{\sqrt{n_{t+1}}}{n_t}.
\end{equation}
By the definition of the sets $R_n$ and $\widetilde R_n$, we have
\begin{align}
\label{sql0}
& R_{n_{t+1}}\subset\bigcup_{a\in R_{n_t}}
\{a+jn_t:\:0\leq j<k_{t+1}\},\\
\label{sql1}
&\widetilde R_{n_{t+1}}\subset\bigcup_{a\in\widetilde R_{n_t}}
\{a+jn_t:\:0\leq j<k_{t+1}\}.
\end{align}
Moreover, by Lemma~\ref{lem:psi}, for every $a\in \widetilde R_{n_t}$, we have
\begin{equation}\label{sql2}
\#\{i\in \widetilde R_{n_{t+1}}:i=a\!\mod n_t\}=\#\widetilde R_{k_{t+1}}=\widetilde \psi(k_{t+1}).
\end{equation}

We need to describe now which and how we fill "?" in
$x_{t+1}[0,n_{t+1}-1]$. This block is divided into $k_{t+1}$ subblocks
\[\underbrace{x_t[0,n_t-1]x_t[0,n_t-1]\ldots x_t[0,n_t-1]}_{k_{t+1}}.\]

We fill in {\bf all} "?" in the first block $x_t[0,n_t-1]$ in such a way to ``destroy'' the convergence of averages in \eqref{eq:perstrsq}
for the time $n_t$, namely
\[
\frac1{\sqrt{n_t}}\sum_{0\leq m<\sqrt{n_t}}F(S^{m^2}x)=
\frac1{\sqrt{n_t}}\Big(\sum_{\substack{m<\sqrt{n_t}\\x_t(m^2)=0}}1-
\sum_{\substack{m<\sqrt{n_t}\\ x_t(m^2)=1}}1+\sum_{\substack{m<\sqrt{n_t}\\x_t(m^2)=?}}(-1)^{x(m^2)}\Big).
\]
And, since the number of $m$ in the last summand is at least $\beta\sqrt{n_t}$ in view of~\eqref{sq5}, we can fill in these places at stage $t+1$ to obtain the sum completely different that the known number which we had from stage $t-1$. We also fill in (in an arbitrary way) the remaining places in $\{0,\ldots, n_{t}-1\}$.

We fill in (in an arbitrary way) all places in $\{n_t,\ldots, n_{t+1}-1\}\setminus  R_{n_{t+1}}$ and only these places, so that
\eqref{sq3} will be satisfied at stage $t+1$.

We must remember that for any $a\in R_{n_t}$ if $x_t(a)\neq ?$ then for every $0\leq j<k_{t+1}$,  we have
$x_{t+1}(a+jn_t)=x_{t}(a+jn_t)=x_t(a)\neq ?$. Moreover,  for any $a\in\widetilde R_{n_t}$ if $x_t(a)= ?$ then for every $0< j<k_{t+1}$
with $a+jn_t\in\widetilde R_{n_{t+1}}$ we have $x_{t+1}(a+jn_t)=?$. In view of \eqref{sql1}, this gives
\begin{align*}
\#\{& i\in\widetilde R_{n_{t+1}}:x_{t+1}(i)\neq ?\}\\
&\leq\widetilde \psi(n_t)+\sum_{a\in\widetilde R_{n_t}:x_t(a)\neq ?}\#\{a+jn_t\in\widetilde R_{n_{t+1}}:0<j<k_{t+1}\}.
\end{align*}
In view of \eqref{sql2} and \eqref{sq4}, it follows that
\begin{align*}
\#&\{ i\in\widetilde R_{n_{t+1}}:x_{t+1}(i)\neq ?\}\leq\widetilde\psi(n_t)+(\widetilde\psi(k_{t+1})-1)\#\{a\in\widetilde R_{n_t}:x_t(a)\neq ?\}\\
&\leq\widetilde \psi(n_t)+(\widetilde \psi(k_{t+1})-1)\gamma_t\widetilde \psi(n_{t})=\Big(\gamma_t+
\frac{1-\gamma_t}{\widetilde\psi(k_{t+1})}\Big)\widetilde\psi(n_{t+1})
\leq \gamma_{t+1}\widetilde\psi(n_{t+1}).
\end{align*}
Therefore, at stage $t+1$, also \eqref{sq4}  is satisfied.

A similar argument combined with \eqref{sq7}, \eqref{sq4} and \eqref{sq2} shows that
\begin{align*}
\#\{ &0\leq m^2<n_{t+1}:x_{t+1}(m^2)= ?\}=
\#\{ i\in R_{n_{t+1}}\cap\N^2:x_{t+1}(i)= ?\}\\
&\geq\sum_{a\in  R_{n_t}:x_t(a)= ?}\#\{a+jn_t\in R_{n_{t+1}}\cap\N^2:0<j<k_{t+1}\}\\
&\geq\sum_{a\in \widetilde R_{n_t}:x_t(a)= ?}\frac{2^{\omega(n_t)}}{4}\frac{\sqrt{n_{t+1}}}{n_t}
=\frac{\sqrt{n_{t+1}}}{4n_t}2^{\omega(n_t)}\#\{a\in\widetilde R_{n_t}:x_t(a)= ?\}\\
&=(1-\gamma_t)\frac{\sqrt{n_{t+1}}}{4n_t}2^{\omega(n_t)}\widetilde\psi(n_t)\geq \beta\sqrt{n_{t+1}}.
\end{align*}
Therefore, at stage $t+1$, also \eqref{sq5}  is satisfied. This completes the construction.

\begin{Remark}
In view of \eqref{sq3}, in the constructed example of Toeplitz system $(X_x,S)$ we have $?_t\leq \psi(n_t)$. Moreover, $\psi(n_t)=o(\varphi(n_t))$.
Indeed, by Proposition~\ref{prop:rhop}, for every prime number $p$ we have $\psi(p^n)\leq p^{n-1}\tfrac{p+2}{2}$. It follows that
\[\frac{\psi(p^n)}{\varphi(p^n)}\leq \frac{1}{2}\cdot\frac{p+2}{p-1}\leq \frac{3}{4}\]
for all prime $p\geq 7$. It follows that
\[\frac{\psi(n_t)}{\varphi(n_t)}=O\Big(\Big(\frac{3}{4}\Big)^{\omega(n_t)}\Big)=o(1).\]
Consequently, we have $?_t=o(\varphi(n_t))$. Therefore, in view of Theorem~\ref{thm:PNT}, $(X_x,S)$ satisfies a PNT.
\end{Remark}

\appendix
\section{The diameter of a tower}\label{app:?}
Let $x\in \mathcal{A}^\Z$ be a Toeplitz sequence with the periodic structure given by $(n_t)_{t\geq 1}$.
Recall that
\[\operatorname{Per}_{n_t}(x)=\{a\in \Z:x(a+jn_t)=x(a)\text{ for all }j\in\Z\}.\]
Let $\operatorname{Aper}_{n_t}(x):=\Z\setminus\operatorname{Per}_{n_t}(x)$. Then, we define the periodic sequence $x_t\in(\mathcal{A}\cup\{?\})^{\Z}$ by:
$x_t(k)=x(k)$ if $k\in \operatorname{Per}_{n_t}(x)$ and $x_t(k)=?$ if $k\in \operatorname{Aper}_{n_t}(x)$.
Note that the density of the set $\operatorname{Aper}_{n_t}(x)$ is equal to $\frac{?_t}{n_t}$, where
\[?_t=\#\{0\leq k<n_t:x_t(k)=?\}=\#(\operatorname{Aper}_{n_t}(x)\cap\{0,1,\ldots,n_t-1\}).\]
It follows that the regularity of $(X_x,S)$ is equivalent to $?_t=o(n_t)$.

\begin{Lemma}\label{lem:?Et}
For any Toeplitz sequence $x\in \mathcal{A}^\Z$ we have
\[?_t\leq \delta(E^t)\leq 3?_t\text{ for every }t\geq 1.\]
\end{Lemma}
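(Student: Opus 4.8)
The plan is to describe each level $E^t_j=\pi^{-1}(D^t_j)$ of the tower directly in terms of the periodic/aperiodic positions of $x$ modulo $n_t$, and then read off both inequalities from that description. Recall that under the canonical embedding $\Z\hookrightarrow H$ the factor map satisfies $\pi(S^ax)=a$, so $S^ax\in E^t_j$ whenever $a\equiv j\pmod{n_t}$. More importantly, if $y\in E^t_j$, write $y=\lim_i S^{a_i}x$; since $\pi$ is continuous and $\pi(y)\in D^t_j$, we have $a_i\equiv j\pmod{n_t}$ for all large $i$, and then for any coordinate $m$ with $m+j\in\operatorname{Per}_{n_t}(x)$ we get $y(m)=\lim_i x(m+a_i)=x(m+j)$. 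Consequently, any two points of $E^t_j$ agree on every coordinate $m$ with $m+j\in\operatorname{Per}_{n_t}(x)$; equivalently, they can differ only at coordinates $m$ with $m+j\in\operatorname{Aper}_{n_t}(x)$.

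For the lower bound $?_t\le\delta(E^t)$, fix $0\le j<n_t$ with $x_t(j)=\,?$, i.e.\ $j\in\operatorname{Aper}_{n_t}(x)$. Then there is $k\in\Z$ with $x(j+kn_t)\ne x(j)$, and the two points $S^jx$ and $S^{j+kn_t}x$ both lie in $E^t_j$ (their $n_t$-residues equal $j$) and differ at coordinate $0$, so $d(S^jx,S^{j+kn_t}x)=1$ and hence $\operatorname{diam}(E^t_j)=1$. Summing over all such $j$ gives $\delta(E^t)\ge\#\{0\le j<n_t:x_t(j)=\,?\}=?_t$.

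For the upper bound $\delta(E^t)\le 3?_t$, put $A:=\operatorname{Aper}_{n_t}(x)$, an $n_t$-periodic subset of $\Z$ with exactly $?_t$ elements per period. By the first paragraph, two points of $E^t_j$ differ only at coordinates $m$ with $m+j\in A$, so $\operatorname{diam}(E^t_j)\le 2^{-\operatorname{dist}(j,A)}$, where $\operatorname{dist}(j,A)=\min_{a\in A}|a-j|$ (and the diameter is $0$ if $A=\emptyset$). Thus $\delta(E^t)\le\sum_{j=0}^{n_t-1}2^{-\operatorname{dist}(j,A)}$. Grouping the indices $j\in\{0,\dots,n_t-1\}$ by the value $\ell=\operatorname{dist}(j,A)$: there are exactly $?_t$ indices with $\ell=0$, while for each $\ell\ge 1$ the indices with $\operatorname{dist}(j,A)=\ell$ lie in $(A+\ell)\cup(A-\ell)$, which has at most $2?_t$ elements per period. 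Therefore $\delta(E^t)\le ?_t+\sum_{\ell\ge 1}2?_t\cdot 2^{-\ell}=3?_t$.

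The only step that needs genuine care is the structural claim in the first paragraph, namely that $y\in\pi^{-1}(D^t_j)$ forces $y(m)=x(m+j)$ on $\operatorname{Per}_{n_t}(x)-j$; this is where the definition of the odometer factor map and the periodicity of $x$ on $\operatorname{Per}_{n_t}(x)$ enter. The remaining estimates are elementary counting and a geometric series. The degenerate cases ($x$ eventually periodic, or $\#\mathcal{A}=1$) require no separate treatment, since then the relevant $?_t$ and tower diameters both vanish.
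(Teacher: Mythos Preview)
Your proof is correct and follows essentially the same approach as the paper: both identify that $\operatorname{diam}(E^t_j)$ is governed by the distance from $j$ to the nearest aperiodic position (the paper records this as an equality, you only need the upper bound together with diameter $1$ at aperiodic positions), and both reduce the upper bound to a geometric-series estimate. The only cosmetic difference is that the paper orders the aperiodic positions $l_1<\cdots<l_s$ and sums over the intervals between them, while you group the indices $j$ by the value of $\operatorname{dist}(j,A)$; the two bookkeepings yield the same $3?_t$.
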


\begin{proof}
First note that for every $0\leq j<n_t$ we have
\[E^t_j=\{y\in X_x:y(k-j)=x(k)=x_t(k)\text{ for all }k\in \operatorname{Per}_{n_t}\}.\]
Moreover, if $k\in \operatorname{Aper}_{n_t}(x)$ then we can find $y,z\in E^t_j$, so that $y(k-j)\neq z(k-j)$.
It follows that
\[\operatorname{diam}(E^t_j)=2^{-\inf\{|n|:n\in \operatorname{Aper}_{n_t}(x)-\{j\}\}}.\]
Suppose that
\[\operatorname{Aper}_{n_t}(x)\cap\{0,1,\ldots,n_t-1\}=\{l_1,l_2,\ldots,l_s\}\]
with $1\leq l_1\leq\ldots\leq l_s\leq n_t$ and $s=?_t$. Thus, $\operatorname{diam}(E^t_{l_i})=1$
and if $l_{i-1}< j<l_i$ ($l_0=l_s-n_t$ and $l_{s+1}=l_1+n_t$) then $\operatorname{diam}(E^t_j)=2^{-\min\{j-l_{i-1},l_{i}-j\}}$.
Therefore,
\[\delta(E^t)=\sum_{0\leq j<n_t}\operatorname{diam}(E^t_j)\geq \sum_{i=1}^s\operatorname{diam}(E^t_{l_i})=s\]
and
\begin{align*}
\delta(E^t)&=\sum_{0\leq j<n_t}\operatorname{diam}(E^t_j)=\sum_{i=1}^s\sum_{\frac{l_{i-1}+l_i}{2}\leq j<\frac{l_{i}+l_{i+1}}{2}}
\operatorname{diam}(E^t_j)\\
&=\sum_{i=1}^s\Big(1+\sum_{1\leq j<\frac{l_{i+1}-l_i}{2}}2^{-j}+\sum_{1\leq j\leq\frac{l_{i}-l_{i-1}}{2}}2^{-j}\Big)\leq 3s,
\end{align*}
which completes the proof.
\end{proof}
As the regularity of $x$ is equivalent to $?_t=o(n_t)$, we have the following conclusion.
\begin{Cor}
A Toeplitz sequence is regular if and only if $\delta(E^t)=o(n_t)$.
\end{Cor}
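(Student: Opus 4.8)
The plan is to read the Corollary off directly from Lemma~\ref{lem:?Et} together with the characterization of regularity recorded just above it. Recall that, by definition, a Toeplitz sequence $x$ is regular exactly when the density of $\operatorname{Aper}_{n_t}(x)$ tends to $0$ as $t\to\infty$; since this density equals $?_t/n_t$, regularity of $(X_x,S)$ is equivalent to $?_t=o(n_t)$. (Equivalently, this is the condition~\eqref{weyl1} rewritten via the canonical sequences $x_t$.)

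Next I would invoke Lemma~\ref{lem:?Et}, which gives the two-sided comparison $?_t\leq\delta(E^t)\leq 3?_t$ for every $t\geq 1$. Dividing through by $n_t$ yields
\[
\frac{?_t}{n_t}\leq\frac{\delta(E^t)}{n_t}\leq 3\,\frac{?_t}{n_t},
\]
so by squeezing one gets $\delta(E^t)/n_t\to 0$ if and only if $?_t/n_t\to 0$. Combining this equivalence with the previous paragraph shows that $x$ is regular if and only if $\delta(E^t)=o(n_t)$, which is precisely the assertion of the Corollary.

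There is essentially no obstacle at this point: all of the content has already been absorbed into Lemma~\ref{lem:?Et}, where the genuine work (identifying $\operatorname{diam}(E^t_j)$ with $2^{-\min\{j-l_{i-1},\,l_i-j\}}$ and summing the resulting geometric tails against the $?_t$ aperiodic coordinates) is carried out. So the proof of the Corollary is just the sandwich argument above.
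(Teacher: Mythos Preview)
Your proof is correct and matches the paper's own reasoning: the paper simply remarks that regularity is equivalent to $?_t=o(n_t)$ and then invokes Lemma~\ref{lem:?Et}, which is exactly the sandwich argument you spell out. There is nothing to add.
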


\section{Sturmian dynamical systems satisfy a PNT}\label{s:dosturmu}
Let $T:\T\to\T$ ($\T:=\R/\Z$) be an irrational rotation on $\T$ by $\alpha$. For every non-zero $\beta\in\T$
let $\{A_0,A_1\}$ be the partition given by the intervals $A_0=[0,\beta)$ and $A_1=[\beta,1)$. For every $x\in \T$
denote by $\bar{x}\in\{0,1\}^\Z$ the code of $x$ defined by $\bar{x}(k)=i$ if and only if $T^kx\in A_i$.
Finally, denote by $X_{\alpha,\beta}\subset\{0,1\}^\Z$ the closure of the set
$\{\bar{x}\in\{0,1\}^\Z:x\in\T\}$. Since $X_{\alpha,\beta}$ is an invariant subset for the left shift $S$ on $\{0,1\}^\Z$,
we can focus  the topological dynamical system $S:X_{\alpha,\beta}\to X_{\alpha,\beta}$.

\begin{Th}
For the topological dynamical system $S:X_{\alpha,\beta}\to X_{\alpha,\beta}$ a PNT holds.
\end{Th}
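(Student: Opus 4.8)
The plan is to exploit that $(X_{\alpha,\beta},S)$ is an almost 1-1 extension of the rotation $(\T,T)$, $Tx=x+\alpha$: there is a continuous equivariant factor map $\pi\colon X_{\alpha,\beta}\to\T$ which on the codes acts by $\bar x\mapsto x$, so that for any $y\in X_{\alpha,\beta}$, writing $x:=\pi(y)$, we have $\pi(S^py)=x+p\alpha$ for all $p$. Since the functions depending on finitely many coordinates are dense in $C(X_{\alpha,\beta})$, it suffices to show that the limit in~\eqref{pnt31} (with $T$ replaced by $S$) exists for $F$ of the form $F(z)=f(z(-m),\dots,z(m))$, $f\colon\{0,1\}^{2m+1}\to\R$; then $F(S^py)=f(y(p-m),\dots,y(p+m))$.

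Next I would show that $y$ agrees with the genuine coding of $x$ off a finite set of coordinates. Put $B_y:=\{k\in\Z:\ x+k\alpha\in\{0,\beta\}\}$; since $\alpha$ is irrational, each of the congruences $x+k\alpha\equiv 0$ and $x+k\alpha\equiv\beta$ modulo $1$ has at most one integer solution $k$, so $\#B_y\le 2$. Writing $y=\lim_i\bar{x}_i$ with $x_i\to x$ (continuity of $\pi$), and using that $A_0=[0,\beta)$, $A_1=[\beta,1)$ gives $\bar{x}_i(k)=\mathbf 1_{A_1}(x_i+k\alpha)$ for all $k$, we obtain $y(k)=\mathbf 1_{A_1}(x+k\alpha)$ whenever $k\notin B_y$ (indeed then $x+k\alpha\notin\{0,\beta\}$ is a continuity point of $\mathbf 1_{A_1}$). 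Consequently, for every $p$ outside the finite set $\Sigma:=\bigcup_{k\in B_y}\{k-m,\dots,k+m\}$, which has at most $2(2m+1)$ elements, we get $F(S^py)=g(x+p\alpha)$, where
\[
g(t):=f\bigl(\mathbf 1_{A_1}(t-m\alpha),\dots,\mathbf 1_{A_1}(t+m\alpha)\bigr).
\]
The function $g\colon\T\to\R$ is constant on each of the finitely many arcs cut out by the points $\{-j\alpha:\,|j|\le m\}\cup\{\beta-j\alpha:\,|j|\le m\}$; in particular it is Riemann integrable.

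Then, since $\Sigma$ is finite, for every $N$ we have
\[
\frac1{\pi(N)}\sum_{p<N}F(S^py)=\frac1{\pi(N)}\sum_{p<N}g(x+p\alpha)+O\Bigl(\tfrac{\#\Sigma\cdot\|F\|_{\sup}}{\pi(N)}\Bigr),
\]
and the error tends to $0$. By Vinogradov's theorem the sequence $(p\alpha)_{p\in\PP}$ is equidistributed modulo $1$, hence $\frac1{\pi(N)}\sum_{p<N}h(p\alpha)\to\int_\T h\,dt$ for every Riemann integrable $h\colon\T\to\R$; applying this to $h(t)=g(x+t)$ yields
\[
\lim_{N\to\infty}\frac1{\pi(N)}\sum_{p<N}F(S^py)=\int_\T g(t)\,dt,
\]
so the limit exists. (As a by-product, the value does not depend on $y$ and equals $\int_{X_{\alpha,\beta}}F\,d\mu$, where $\mu$ is the unique $S$-invariant measure.) General $F\in C(X_{\alpha,\beta})$ follow by uniform approximation.

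The only serious ingredient is Vinogradov's equidistribution theorem, which I would use as a black box. The point that needs a little care is the bookkeeping of the second paragraph: showing that an arbitrary point of the orbit closure coincides with the honest coding of its rotation-image outside a set of times that is finite, and whose cardinality is controlled by $m$ alone (hence independent of $N$). Everything else is routine.
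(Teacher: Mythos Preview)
Your argument is correct and follows essentially the same route as the paper: reduce to cylinder functions, push them down to Riemann integrable step functions on $\T$ via the factor map $\pi$, and apply Vinogradov's equidistribution of $(p\alpha)$. The one difference is that you are in fact more careful than the paper: the paper asserts that a cylinder function $f$ on $X_{\alpha,\beta}$ can be written as $F\circ\pi$ for a step function $F$ on $\T$, which is not literally true at the (countably many) points $x$ with $x+k\alpha\in\{0,\beta\}$ for some $|k|\le m$, whereas you correctly isolate the finite exceptional set $\Sigma$ of times and show its contribution is $O(1/\pi(N))$.
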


\begin{proof}
For every $y=(y(n))_{n\in\Z}\in X_{\alpha,\beta}$ the set
$\bigcap_{n\in\Z}\overline{A}_{y(n)}\subset\T$ has exactly one element $\pi(y)\in\T$. Moreover, $\pi:X_{\alpha,\beta}\to\T$
is a continuous map intertwining $S$ and $T$ and there exists a unique $S$-invariant probability measure $\mu$ on $X_{\alpha,\beta}$.
The $\pi$-image of $\mu$ coincides with Lebesgue measure on $\T$.

By Vinogradov's theorem, for any character $f(x)=e^{2\pi in x}$, $n\in\Z$, we have
\begin{equation}\label{eq:PNTstong}
\lim_{N\to\infty}\frac1{\pi(N)}\sum_{p<N}f(T^px)=\int_{\T} f(x)\,d x\text{ for every }x\in\T.
\end{equation}
Since every continuous function $f:\T\to\C$
is uniformly approximated by trigonometric polynomials, \eqref{eq:PNTstong} holds also for any continuous $f$.
Moreover, \eqref{eq:PNTstong} holds for any Riemann integrable $f:\T\to\R$. Indeed, for every $\vep>0$ there are two
continuous functions $f_-,f_+:\T\to\R$ such that $f_-(x)\leq f(x)\leq f_+(x)$ for every $x\in\T$ and
$\int_{\T}(f_+(x)-f_-(x))dx<\vep$. It follows that
\begin{align*}
\limsup_{N\to\infty}\frac1{\pi(N)}\sum_{p<N}f(T^px)&\leq \lim_{N\to\infty}\frac1{\pi(N)}\sum_{p<N}f_+(T^px)\\
&=
\int_{\T} f_+(x)\,d x< \int_{\T} f(x)\,d x+\vep
\end{align*}
and
\begin{align*}
\liminf_{N\to\infty}\frac1{\pi(N)}\sum_{p<N}f(T^px)&\geq \lim_{N\to\infty}\frac1{\pi(N)}\sum_{p<N}f_-(T^px)\\
&=
\int_{\T} f_-(x)\,d x> \int_{\T} f(x)\,d x-\vep.
\end{align*}
As $\vep>0$ can be chosen freely, this gives \eqref{eq:PNTstong}.

Suppose that $f:X_{\alpha,\beta}\to\R$ depends only on finitely many coordinates. More precisely, assume that
$f(y)=g(y(-n),\ldots,y(n))$ for some $g:\{0,1\}^{2n+1}\to\R$. Then there exists $F:\T\to\R$ such that $f=F\circ\pi$
and $F$ is constant on the atoms of the partition $\bigvee_{i=-n}^nT^{-i}\{A_0,A_1\}$ (for example, if $n=0$ and $f$ is the characteristic function of $\{y\in X_{\alpha,\beta}:\: y(0)=0\}$ then $F$ is $\mathbf{1}_{A_0}$). It follows that $F$
is Riemann integrable. Therefore, for every $y\in X_{\alpha,\beta}$, we have
\begin{align*}
\frac1{\pi(N)}\sum_{p<N}f(S^py)=\frac1{\pi(N)}\sum_{p<N}F(T^p\pi(y))\to \int_{\T}F(x)dx=\int_{X_{\alpha,\beta}}f\,d\mu.
\end{align*}
Since every continuous function $f:X_{\alpha,\beta}\to\R$ is uniformly approximated by functions depending on finitely many coordinates,
\[\frac1{\pi(N)}\sum_{p<N}f(S^py)\to\int_{X_{\alpha,\beta}}f\,d\mu\text{ for any }y\in X_{\alpha,\beta}
\]
holds for every continuous $f$.
\end{proof}

{
\subsection*{Acknowledgements}
We would like to thank  the
anonymous referee for suggestions to improve the  paper.}

\end{document}